\newcommand{\R}{\mathbb{R}}
\newcommand{\Z}{\mathbb{Z}}
\newcommand{\diam}[1]{\operatorname{diam}\left(#1\right) }
\newcommand{\supp}[1]{\operatorname{supp}\left(#1\right) }
\newcommand{\dist}[1]{\operatorname{dist}\left(#1\right) }
\newcommand{\one}{\mathds{1}}
\DeclareMathOperator*{\esssup}{ess\,sup}
\newcommand{\Ss}{\mathcal{S}}
\newcommand{\AS}{\mathcal{A}_{\Ss}}
\newcommand{\D}{\mathscr{D}}
\theoremstyle{plain}
\newtheorem{theorem}{Theorem}[section] 
\newtheorem{problem}{Problem}[section]
\newtheorem*{theorem*}{Theorem}
\newtheorem{lemma}[theorem]{Lemma}
\theoremstyle{definition} 
\newtheorem{remark}[theorem]{Remark} 
\numberwithin{equation}{section}
\newtheorem{ltheorem}{Theorem}
\title[Sparse via Calder\'on-Zygmund: Dini kernels]{Sparse domination via the Calder\'on-Zygmund decomposition: the example of Dini-smooth kernels}
\author[F. Ballesta-Yag\"ue]{Fernando Ballesta-Yag\"ue}
\address[F. Ballesta-Yag\"ue]{Departamento de An\'alisis Matem\'atico y Matem\'atica Aplicada, Facultad de Ciencias Matem\'aticas, Universidad Complutense de Madrid \hfill\break\indent Pl. de las Ciencias 3, 28040 Madrid, Spain}
\email{ferballe@ucm.es}
\author[J.M. Conde Alonso]{Jos\'e M. Conde-Alonso}
\address[J.M. Conde Alonso]{Departamento de Matem\'aticas, Universidad Aut\'onoma de Madrid \hfill\break\indent 
 C/ Francisco Tom\'as y Valiente, 28049 Madrid, Spain}
\email{jose.conde@uam.es}
\begin{document}

\begin{abstract}
    In this expository article, we briefly survey the main known schemes of proof of sparse domination principles within harmonic analysis. We then use the one based on the Calder\'on-Zygmund decomposition to prove a dual sparse domination estimate for Calder\'on-Zymgund operators with Dini-smooth kernels, with an eye on the difficulties that arise when trying to transfer the argument to spaces with nondoubling measures. 
\end{abstract}

\maketitle

\section*{Introduction}

The last decade has seen the rapid development of the sparse domination technique within harmonic analysis. Given a measure space $(\Omega,\Sigma,\mu)$ and $0<\eta<1$, a family $\Ss\subset \Sigma$ is called $\eta$-sparse if for each $Q \in \Ss$, there exists $E_Q \subset Q$ with two properties:
\begin{itemize}
    \item $\mu(E_Q) \geq \eta \mu(Q)$ for each $Q\in\Ss$.
    \item For each $Q\neq R \in \Ss$, $E_Q$ and $E_R$ are disjoint.
\end{itemize}
In the simplest version, a sparse operator is an averaging one of the form 
$$
\AS f(x) = \sum_{Q\in \Ss} \langle f \rangle_Q \one_Q(x), \quad \mbox{where} \quad \langle f \rangle_Q = \frac{1}{\mu(Q)}\int_Q f d\mu,
$$
and the family $\Ss$ is $\eta$-sparse. Whenever $\Ss$ is composed of reasonable sets, $\AS$ is a self-adjoint operator which is bounded on $L^p(\mu)$ for $1<p<\infty$ and which is of weak-type $(1,1)$. A sparse domination for an operator $T$ refers to an inequality of one of the kinds below:
$$
\textbf{(i)} \;\|Tf\|_{\mathbb{X}} \lesssim \|\AS f\|_{\mathbb{X}}, \quad \textbf{(ii)} \;\left|\langle Tf,g\rangle\right| \lesssim \langle \AS |f|,|g|\rangle, \quad \textbf{(iii)} \;|Tf(x)| \lesssim \AS |f|(x) \; a.e.
$$
$\mathbb{X}$ is some appropriate Banach space of functions in inequality \textbf{(i)}, which is weaker than \textbf{(ii)}, while that is clearly implied by \textbf{(iii)}. In all the above, the family $\Ss$ is $\eta$-sparse for some fixed valued of $\eta$, but the family itself does depend on $f$ and $g$. A sparse domination --of one of the kinds \textbf{(ii)} and \textbf{(iii)}, which are the ones of more interest to us here-- can be viewed as a somewhat strong quantification of the weak-type behavior of the operator $T$, if the geometry of the sets in $\Ss$ is nice enough. In this paper, we will only consider families of cubes with sides parallel to the coordinate axes.

Sparse domination has been first studied in $\R^d$, with its importance coming from weighted inequalities. This is so because $\AS$ --when $\Ss$ is a family of cubes-- can be shown to easily satisfy the $A_2$ conjecture: if $w$ belongs to the Muckenhaupt class $A_2$, then a beautiful argument from \cite{CMP2012} implies that
$$
\|\AS f\|_{L^2(w)} \lesssim [w]_{A_2} \|f\|_{L^2(w)},
$$
with implicit constant independent of $w$. This matches the sharp weighted bound for Calder\'on-Zygmund singular integral operators, proved in full generality in \cite{Hy2012}. Therefore, a sparse domination for a Calder\'on-Zygmund operator $T$ of any of the kinds \textbf{(i)-(iii)} above yields a new proof of the $A_2$ theorem, which was a major motivation in the beginning of the theory. Later, sparse domination results have been used to prove sharp weighted inequalities beyond the $A_2$ conjecture, including weighted weak-type estimates or estimates in non-Euclidean contexts; some weighted estimates can even be attained without the use of extrapolation, using only sparse domination techniques \cite{Moen}. Even --previously unknown-- unweighted weak-type estimates have been shown as a consequence of delicate sparse domination arguments, proving the applicability of the theory beyond its original goals. A very nice account of the development of the field can be found in \cite{Pe2019}. 

In this note, we focus on sparse domination principles for Calder\'on-Zygmund singular integral operators on $\R^d$. A Calder\'on-Zygmund operator is an $L^2$-bounded linear map that formally admits the representation
$$
Tf(x) = \int_{\R^d} K(x,y) f(y) \, dy,
$$
for appropriate functions $f$ and points $x$, and whose kernel $K$ satisfies $|K(x,y)| \lesssim |x-y|^{-d}$ when $x\neq y$ plus an additional smoothness condition, that we will make precise below and which can be the source of meaningful technical difficulties. There are several ways in which one can achieve sparse domination for these operators:

\begin{itemize}
    \item \textbf{Sparse domination based on median oscillation decomposition:} Let $m_Q(f)$ denote a median of a function $f$ over a cube $Q$, and $\omega_\lambda(f;Q)$ its oscillation off a $\lambda$-fraction, that is
    $$
    \omega_\lambda(f;Q)= \inf_{A \subset Q; |A| \geq (1-\lambda)|Q|} \sup_{x,y\in A}|f(x)-f(y)|.
    $$
    Lerner's median oscillation formula states the following:
    $$
    |f(x)-m_Q(f)| \lesssim \sum_{R \in \Ss}\omega_{\lambda(d)}(f;Q),
    $$
    for some sparse family $\Ss$ and some dimensional quantity $0<\lambda(d)<1$. Applying the above to $T$ yields a pointwise bound for it in terms of certain non-cancellative Haar shifts. This allows one to prove sparse domination in the sense \textbf{(i)}, yielding the aforementioned simplification of the $A_2$ theorem \cite{Le2013,Le2013b}. Later, the same technique allowed the first sparse domination proofs in the pointwise sense \textbf{(iii)} \cite{CAR2016,LeNa2019}. In all these results, the kernel smoothness is required to be the so-called logarithmic Dini condition, a somewhat strong one. 
    \item \textbf{Sparse domination based on grand maximal functions:} In \cite{La2017} and the works that followed it, variants of the following (local) grand maximal operator are introduced:
    $$
    \mathcal{M}_{Q}f(x) = \sup_{x\in R \subset Q} \esssup_{y \in R}|T(f\one_{3Q\setminus 3R})(y)|.
    $$
    This grand maximal operator is of weak-type $(1,1)$, and so it can be used to run a stopping time argument that simplifies the proof above and eliminates the passage through Haar shift operators. Using this scheme, Lacey was able to achieve sparse domination for operators with kernels satisfying Dini smoothness. The method was further streamlined in subsequent works \cite{Le2016,LeO2020}.
    \item \textbf{Sparse domination based on the Calder\'on-Zygmund decomposition:} One scheme of proof of sparse domination of the kind \textbf{(ii)} uses infinitely many Calder\'on-Zygmund decompositions and was initially introduced to study non-kernel operators \cite{BFP2016}. Later, it was used to study dyadic variants of Calder\'on-Zygmund operators \cite{CuDPOu2018} and rough singular integrals \cite{CACDPO2017}. This scheme has later evolved using local, single-scale behavior of the operators under consideration as a tool to achieve the estimates \cite{BRS2024}. This class of single-scale gains of the operators is usually termed $L^p$-improving. 
\end{itemize}

There are of course many other recent developments that carry the theory much further, and we refer again to \cite{Pe2019} for a more systematic account of sparse domination for objects of highly oscillatory nature, Banach/operator valued settings, the vector valued theory with matrix weights, and other settings. All in all, a large part of the analysis of interesting operators near the $L^1$ endpoint has been transferred to the new language of sparse domination. One outstanding remaining question of interest to us, however, is that of nonhomogeneous spaces, that we discuss in more detail in Section \ref{sec:sec3}. We want to contextualize some of the difficulties encountered in that setting in our proof below. Indeed, in this note we use the scheme of proof, neatly outlined in \cite{CuDPOu2018}, to study smooth Calder\'on-Zygmund operators. Our main result below is strictly contained in many others cited above, one of them being \cite{CACDPO2017}, where a similar approach is taken. We also follow arguments from \cite{BRS2024} at times.

\begin{ltheorem}\label{theoremA}
Let $T$ be a Calder\'on-Zygmund operator whose kernel satisfies the Dini condition, and $0<\eta<1$ small enough. For each pair $(f_1, f_2)$ of nonnegative, bounded functions supported on some cube $Q_0\in \D$, there exists an $\eta$-sparse family of cubes $\Ss$ such that
\[
\left|\left\langle T f_1, f_2\right\rangle\right| \lesssim \sum_{Q \in \Ss}\left\langle f_1\right\rangle_Q\left\langle f_2\right\rangle_Q|Q|.
\]    
\end{ltheorem}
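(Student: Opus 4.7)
The plan is to follow the iterative Calderón-Zygmund scheme used in \cite{BFP2016, CuDPOu2018, CACDPO2017, BRS2024}: produce the sparse family $\mathcal{S}$ via a stopping-time construction on the averages of $f_1$ and $f_2$, and then establish a local recursive inequality at each $Q \in \mathcal{S}$ which, iterated downward from $Q_0$, yields the sparse bound.

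\textbf{Construction of $\mathcal{S}$.} Fix a large constant $A = A(d,\eta)$. Starting with $\mathcal{S} = \{Q_0\}$, I would adjoin, for each $Q\in\mathcal{S}$, the maximal dyadic subcubes $Q' \subsetneq Q$ satisfying $\langle f_1 \rangle_{Q'} > A\langle f_1\rangle_Q$ or $\langle f_2\rangle_{Q'} > A\langle f_2\rangle_Q$, and iterate. By the weak-type $(1,1)$ bound for the dyadic maximal function, the children cover at most $2|Q|/A$ of $Q$, so the sets $E_Q := Q \setminus \bigcup\operatorname{ch}_\mathcal{S}(Q)$ are pairwise disjoint and satisfy $|E_Q|\geq \eta |Q|$ for $A$ large enough, giving the desired $\eta$-sparseness.

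\textbf{Recursive reduction and CZ decomposition.} Set $\Lambda(Q) := |\langle T(f_1\one_Q), f_2\one_Q\rangle|$. The theorem will follow by iterating from $Q_0$ the key inequality
\[
\Lambda(Q) \leq C\langle f_1\rangle_Q \langle f_2\rangle_Q |Q| + \sum_{Q_j \in \operatorname{ch}_\mathcal{S}(Q)} \Lambda(Q_j).
\]
I prove it via simultaneous Calderón-Zygmund decompositions $f_i\one_Q = g_i + \sum_j b_{i,j}$, with $g_i = f_i\one_{E_Q} + \sum_j \langle f_i\rangle_{Q_j}\one_{Q_j}$ and $b_{i,j}=(f_i - \langle f_i\rangle_{Q_j})\one_{Q_j}$. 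Lebesgue differentiation on $E_Q$ combined with maximality of the stopping cubes yields $\|g_i\|_\infty \lesssim A\langle f_i\rangle_Q$, together with $\|g_i\|_1 \leq \langle f_i\rangle_Q |Q|$, $\int b_{i,j}=0$, and $\sum_j \|b_{i,j}\|_1 \lesssim \langle f_i\rangle_Q |Q|$.

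\textbf{Bounding the off-diagonal remainder.} Expanding $\Lambda(Q)$ and subtracting $\sum_j \Lambda(Q_j)$ leaves three families of off-diagonal pieces. The $(g_1,g_2)$ off-diagonal contributions are bounded by $L^2\to L^2$ boundedness of $T$, using $\|g_i\|_2 \lesssim A^{1/2}\langle f_i\rangle_Q |Q|^{1/2}$. The bad-bad pieces $\sum_{j\neq k}\langle Tb_{1,j}, b_{2,k}\rangle$, with $Q_j\cap Q_k=\emptyset$, are handled by using the cancellation of both $b_{1,j}$ and $b_{2,k}$ together with the Dini condition. The cross terms $\sum_j\langle T(g_1\one_{Q\setminus Q_j}), b_{2,j}\rangle$ split into a far part (integration over $(3Q_j)^c$) and a near part (over $3Q_j\setminus Q_j$); the far part is treated by the cancellation of $b_{2,j}$ and the Dini estimate $\int_{(3Q_j)^c}|K(x,y)-K(x_{Q_j},y)|\,dy \lesssim \|\omega\|_{\mathrm{Dini}}$, which combined with $\|g_1\|_\infty \lesssim A\langle f_1\rangle_Q$ and $\sum_j \langle f_2\rangle_{Q_j}|Q_j| \lesssim \langle f_2\rangle_Q|Q|$ yields the required bound.

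\textbf{Main obstacle.} The delicate step is controlling the near cross contribution
\[
\int_{3Q_j\setminus Q_j} g_1(y)\, T^*b_{2,j}(y)\, dy,
\]
where $g_1$ is controlled in $L^\infty$ but $b_{2,j}$ is only in $L^1$ with norm $\lesssim \langle f_2\rangle_{Q_j}|Q_j|$, and the kernel's singularity at the boundary of $Q_j$ obstructs a direct cancellation argument. I would pair the $L^2$-boundedness of $T^*$ with the $L^\infty$ control $\|g_1\|_{L^\infty(3Q_j)} \lesssim A\langle f_1\rangle_Q$ and the cancellation of $b_{2,j}$ against a smoothed comparison function—effectively replacing the missing pointwise control on $T^*b_{2,j}$ near $Q_j$ by its $L^2$ mean on $3Q_j$ and summing via Cauchy-Schwarz against $\sum_j|Q_j|\leq |Q|$. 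As hinted in the introduction, this near estimate uses the doubling structure of Lebesgue measure in an essential way, which is why it represents the main obstruction to extending the scheme to the nondoubling setting of Section~\ref{sec:sec3}.
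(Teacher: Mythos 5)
Your stopping-time construction and the overall recursive shape are in the spirit of the paper's argument, but the recursion you set up does not close, and the gap sits exactly where you place your ``main obstacle.'' Two concrete problems. First, your treatment of the near cross term $\int_{3Q_j\setminus Q_j} g_1\, T^{*}b_{2,j}$ by ``replacing the missing pointwise control on $T^{*}b_{2,j}$ by its $L^2$ mean on $3Q_j$ and summing via Cauchy--Schwarz'' requires an $L^2$ bound on $b_{2,j}$; but the bad part of a Calder\'on--Zygmund decomposition is controlled only in $L^1$ by the stopping averages, and $\|b_{2,j}\|_2$ can be as large as $\|f_2\|_\infty^{1/2}\bigl(\langle f_2\rangle_{Q_j}|Q_j|\bigr)^{1/2}$, which is incompatible with the target bound $\langle f_1\rangle_Q\langle f_2\rangle_Q|Q|$. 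Second, because your stopping cubes are purely dyadic, two adjacent cubes $Q_j$, $Q_k$ can have arbitrarily different side lengths; the bad--bad near interactions $\langle Tb_{1,j},b_{2,k}\rangle$ for adjacent cubes then cannot be handled by the cancellation-plus-Dini argument (which needs $|x-y|\geq 2|y-x_{Q_j}|$), nor by a single-scale $L^1\to L^\infty$ bound (which needs comparable scales).

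The paper resolves both issues by changing two ingredients of the recursion. The stopping cubes are the Whitney cubes $L_j$ of the level set $\Omega=\{\mathcal{M}f_1>c_0\langle f_1\rangle\}\cup\{\mathcal{M}f_2>c_0\langle f_2\rangle\}$, so adjacent cubes have comparable side lengths and $2\Lambda_d L_j\subset\Omega$. More importantly, the recursive quantity is not $\langle T(f_1\one_{L_j}),f_2\one_{L_j}\rangle$ but $\langle T^{(S(L_j))}(f_1\one_{L_j}),f_2\rangle$, where $T^{(S(L_j))}$ retains only the kernel scales $\lesssim\ell(L_j)$ coming from a radial Littlewood--Paley decomposition of $K$, and is automatically localized to $\Lambda_d L_j$ by Lemma \ref{localizationTs}. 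All small-scale, near-boundary interactions are thereby deferred to the next iteration instead of being estimated at the current step; what must be estimated at step $Q$ involves only scales $s>S(L_j)$ acting on the atoms $b_{i,L_j}$, where the annulus localization of Lemma \ref{lemmaAnnuliDecomposition} and the Dini smoothness apply cleanly, plus boundedly many comparable-scale pairs $(L_j,L_{j'})$ controlled by the single-scale bound $\|T_s f\|_\infty\lesssim 2^{-sd}\|f\|_1$ and the Whitney geometry. Without this scale truncation (or a substitute such as Lacey's grand maximal function), the key inequality $\Lambda(Q)\leq C\langle f_1\rangle_Q\langle f_2\rangle_Q|Q|+\sum_j\Lambda(Q_j)$ is not established by the argument you sketch.
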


The proof of Theorem \ref{theoremA} splits the kernel $K$ of the Calder\'on-Zygmund operator $T$ using a radial Littlewood-Paley decomposition. This was already used by \cite{HRT2017} in order to prove weighted inequalities for rough singular integrals, and is the point of view adopted in \cite{CACDPO2017} as well. Then, a careful sequence of Whitney decompositions of the relevant level sets allows one to estimate a piece of the operator in each step using the simultaneous Calder\'on-Zygmund decomposition of the two functions and is used to construct $\Ss$. As has been noted elsewhere \cite{LaMe2017}, the family $\Ss$ in the statement must depend on the pair of functions $(f_1,f_2)$, but does not depend on $T$. 

The scheme of proof described above has been carried over several times in the literature. We have chosen to present the (smooth) Calder\'on-Zygmund case --with pointwise smoothness bounds-- because we find that it illustrates the method in a clear way, without the additional technicalities of other settings, like the rough one of \cite{CACDPO2017}. On the other hand, it does incorporate geometric difficulties, like interactions between cubes that are near one another, which are not present in the dyadic setting of \cite{CuDPOu2018}. Therefore, we believe that it is better to display them because it is often a source of difficulties when one tries to transfer the full scheme to other settings or the analysis of other operators. Our proof is reasonably self-contained, relying only on the classical Whitney decomposition, while all other details are provided. Finally, the presentation of said details allows us to discuss the nonhomogeneous case, where significant challenges remain open and where it is not even clear that the sparse strategy is the right one to deal with weighted inequalities. 

The rest of this note is structured as follows: in Section \ref{sec:sec1} we explain the two main (classical) tools that appear in the proof of the sparse domination result: Whitney coverings and a modification of the Calder\'on-Zygmund decomposition. We also introduce Calder\'on-Zygmund singular integral operators and the decomposition in scales of their kernels that we are going to use in our analysis. Section \ref{sec:sec2} contains the proof of Theorem \ref{theoremA} via an iterative scheme. In Section \ref{sec:sec3} we analyze possible extensions of the main result to nonhomogeneous contexts, where partial results have been achieved, and the challenges that one must overcome to obtain a more satisfactory result than the existing ones. 

\noindent\textbf{Acknowledgements:} We thank the anonymous referees for a very careful reading of the manuscript, which led to a much better presentation of the material. Conde-Alonso also thanks Jill Pipher for initially suggesting him to write this note.

\section{Calder\'on-Zygmund decomposition, Whitney covering and singular integrals} \label{sec:sec1}

We use the standard dyadic system $\D$ of half-open cubes in $\R^d$ as a tool below, even if a martingale structure is not strictly necessary in order to prove a sparse domination result (see, for example, \cite{CADPPV2022}). Indeed, the sparse family $\Ss$ that appears in the statement of Theorem \ref{theoremA} is not a subfamily of $\D$.

\subsection{The Whitney covering theorem} The Whitney decomposition is a clever way to partition an open set $\Omega$ into dyadic cubes whose size is proportional to their distance to the boundary of $\Omega$. We will use the following version of the theorem, which has features that we shall use in our estimates below and the constants that will be needed below. To that end, define $k(d)$ as the smallest positive integer such that $2^{k(d)-1}> \sqrt{d}$, and set
\begin{equation*}
    \Lambda_d \coloneqq 2^{k(d)+2}+1.    
\end{equation*}

\begin{theorem}\label{Whitney}
Let $\Omega\subsetneq\R^d$ be open and fix a Whitney constant $C_{\mathcal{W}}>\sqrt{d}$. There exists a family $\mathcal{P}=\{L_j\} \subset \D$ such that
    \begin{enumerate}
        \item $\bigcup_{j}L_j=\Omega$, and the cubes $L_j$ are pairwise disjoint.
        \item $(C_{\mathcal{W}}-\sqrt{d})\cdot\ell(L_j)\leq \dist{L_j,\Omega^c}\leq 2C_{\mathcal{W}}\cdot \ell(L_j)$.
    \end{enumerate}
Moreover, if we take $C_{\mathcal{W}}>\sqrt{d}(2\Lambda_d+1)$, then
    \[
    2\Lambda_dL_j\subset \Omega  \text{ for all } j,
    \]
    and
    \[
    \text{if }\Lambda_dL_j\cap \Lambda_dL_k\neq\varnothing, \text{ then }\ell(L_j)\sim \ell(L_k).
    \]
\end{theorem}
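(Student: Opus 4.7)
The plan is to carry out the classical Whitney construction by dyadic maximality. For each $x\in\Omega$ I would define $L_x$ to be the \emph{maximal} dyadic cube containing $x$ for which $\dist{L_x,\Omega^c}\geq C_{\mathcal{W}}\,\ell(L_x)$. Such a cube exists because very small dyadic cubes around $x$ automatically satisfy the condition (as $\ell(L_x)\to 0$ the right-hand side vanishes while $\dist{x,\Omega^c}>0$), whereas the nested chain of dyadic cubes through $x$ eventually contains points of $\Omega^c$, so the condition must fail for sufficiently large ancestors. Let $\mathcal{P}$ be the collection of distinct cubes $L_x$ produced in this way. Covering of $\Omega$ is tautological, and pairwise disjointness follows from the dyadic nested-or-disjoint dichotomy together with maximality: two distinct selected cubes cannot strictly nest.

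The two-sided bound in item $(2)$ then splits naturally. The lower bound is immediate from the selection rule. The upper bound uses the failure of the dyadic parent $\widehat{L_j}$ of $L_j$, namely $\dist{\widehat{L_j},\Omega^c}<C_{\mathcal{W}}\,\ell(\widehat{L_j})=2C_{\mathcal{W}}\,\ell(L_j)$, combined with the triangle inequality and $\diam{\widehat{L_j}}=2\sqrt{d}\,\ell(L_j)$. A minor refinement of the selection rule --for instance, phrasing the distance criterion from a distinguished vertex of the cube rather than from the whole cube-- absorbs the additive $\sqrt{d}$ loss and produces exactly the stated constants.

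For the ``moreover'' conclusions under $C_{\mathcal{W}}>\sqrt{d}(2\Lambda_d+1)$: any point of $2\Lambda_d L_j$ lies within distance $\sqrt{d}\Lambda_d\,\ell(L_j)$ of $L_j$, while the lower bound from $(2)$ gives $\dist{L_j,\Omega^c}\geq(C_{\mathcal{W}}-\sqrt{d})\,\ell(L_j)>2\sqrt{d}\Lambda_d\,\ell(L_j)$ by hypothesis, so $2\Lambda_d L_j$ is strictly interior to $\Omega$. For the sidelength comparability when $\Lambda_d L_j\cap\Lambda_d L_k\neq\varnothing$, I would assume WLOG $\ell(L_j)\leq\ell(L_k)$; an intersection point combined with the upper bound on $\dist{L_k,\Omega^c}$ produces a point of $\Omega^c$ within distance $\lesssim\ell(L_k)$ from $L_j$, and the lower bound on $\dist{L_j,\Omega^c}$ then forces $\ell(L_j)\gtrsim\ell(L_k)$, giving $\ell(L_j)\sim\ell(L_k)$. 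The main obstacle I anticipate is not conceptual but arithmetic: the careful book-keeping of constants needed to ensure that the threshold on $C_{\mathcal{W}}$ makes all claims hold simultaneously with exactly the stated constants, and the choice of the selection rule precise enough to avoid the $\sqrt{d}$ slack in the upper bound. The underlying strategy is entirely classical and goes back to Stein.
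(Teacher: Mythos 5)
Your overall strategy is the classical one and is essentially what the paper relies on: the paper does not prove Theorem \ref{Whitney} itself but points to Stein's argument with the layers $\Omega_k=\{x: C_{\mathcal{W}}2^{-k}<\dist{x,F}\leq C_{\mathcal{W}}2^{-k+1}\}$. You replace the layer-based selection by the equivalent ``maximal dyadic cube satisfying a distance criterion'' selection; existence of the maximal cube, coverage, and disjointness are all correctly justified. There are, however, two concrete points where your sketch does not deliver the statement as written. First, the upper bound in item (2): with your rule $\dist{L,\Omega^c}\geq C_{\mathcal{W}}\ell(L)$, failure at the parent gives $\dist{L_j,\Omega^c}\leq \dist{\widehat{L_j},\Omega^c}+\diam{\widehat{L_j}}<(2C_{\mathcal{W}}+2\sqrt{d})\ell(L_j)$, and your proposed fix (measuring the distance from a distinguished vertex) does not remove the slack: the vertex of $L_j$ is still up to $\diam{\widehat{L_j}}=2\sqrt{d}\,\ell(L_j)$ away from the vertex of the parent, so the same additive $2\sqrt{d}$ reappears. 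The way to get exactly $(C_{\mathcal{W}}-\sqrt{d})\ell\leq\dist{L_j,\Omega^c}\leq 2C_{\mathcal{W}}\ell$ is Stein's selection: pre-select the generation-$k$ cubes meeting $\Omega_k$ (so each contains a point $x$ with $C_{\mathcal{W}}\ell<\dist{x,\Omega^c}\leq 2C_{\mathcal{W}}\ell$, which yields both bounds by the triangle inequality) and then pass to maximal pre-selected cubes. For the paper's purposes your weaker constant $2(C_{\mathcal{W}}+\sqrt{d})$ would actually suffice, since only the lower bound and two-sided comparability with constants depending on $C_{\mathcal{W}},d$ are used downstream, but it is not the stated constant.

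Second, in the comparability argument for $\Lambda_dL_j\cap\Lambda_dL_k\neq\varnothing$ with $\ell(L_j)\leq\ell(L_k)$, you have the roles of the two cubes reversed. Using the upper bound on $\dist{L_k,\Omega^c}$ together with the lower bound on $\dist{L_j,\Omega^c}$ only yields $(C_{\mathcal{W}}-\sqrt{d})\ell(L_j)\lesssim\ell(L_k)$, which is the trivial direction. The nontrivial direction $\ell(L_k)\lesssim\ell(L_j)$ requires the \emph{upper} bound on the small cube and the \emph{lower} bound on the large one: from an intersection point $z$ one gets
\[
(C_{\mathcal{W}}-\sqrt{d})\,\ell(L_k)\leq\dist{L_k,\Omega^c}\leq \sqrt{d}\Lambda_d\,\ell(L_k)+\sqrt{d}(\Lambda_d+1)\ell(L_j)+2C_{\mathcal{W}}\ell(L_j),
\]
and the hypothesis $C_{\mathcal{W}}>\sqrt{d}(2\Lambda_d+1)$ makes the coefficient of $\ell(L_k)$ on the left dominate, giving $\ell(L_k)\lesssim_{C_{\mathcal{W}},d}\ell(L_j)$. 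Your argument for $2\Lambda_dL_j\subset\Omega$ is correct.
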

The proof of the first two properties can be found in \cite[Ch. IV, Th. 1]{steinSingularIntegrals}, if we define the sets $\Omega_k$ that appear in that proof by 
$$
\Omega_{k}=\left\{x\in \R^d: C_{\mathcal{W}}2^{-k}<\dist{x,F}\leq C_{\mathcal{W}}2^{-k+1}\right\}.
$$
\subsection{Calder\'on-Zygmund decomposition with respect to a family of cubes} As above, let $\Omega$ be open and let $\mathcal{P}=\{L_j\}_{j\in J} \subset \D$ be a pairwise disjoint family such that $\bigcup L_j=\Omega$. Let $f\in L_{\mathrm{loc}}^{1}(\R^d)$. Define
\begin{align*}
    b_{L_j}
    &\coloneqq
    (f-\langle f\rangle_{L_j})\one_{L_j}, \quad \mbox{and} \quad b \coloneqq \sum_{j\in J}b_{L_j};
    \\
    g &\coloneqq
    f\one_{\Omega^c}+\sum_{j}\langle f\rangle_{L_j}\one_{L_j}.
    \end{align*}
The decomposition $f=g+b$ is called the Calder\'on-Zygmund decomposition of $f$ adapted to $\mathcal{P}$. Calder\'on-Zygmund decompositions are useful when the Calder\'on-Zygmund cubes are maximal with respect to the behavior of a certain maximal operator. We use the standard uncentered one:
$$
\mathcal{M}f(x) = \sup_{Q \ni x} \langle |f| \rangle_Q,
$$
with the supremum running over half-open cubes with sides parallel to the coordinate axes. We provide the details of the properties of the Calder\'on-Zygmund decomposition in this formulation.

\begin{lemma}\label{CZdecompProperties}
Let $\lambda>0$ be fixed and $0\leq f\in L_{\mathrm{loc}}^{1}(\R^d)$ be such that $\supp{f}\subset Q$. Let $\Omega \subset Q$ be open. Let $\{L_j\}$ be the Whitney decomposition of $\Omega$. If
    \begin{equation}\label{propiedadMaximalEnElAbierto}
        \mathcal{M}(f)\one_{\Omega^{c}}\lesssim \lambda,
    \end{equation}
where $\Omega^{c}=\R^d\setminus{\Omega}$, then    
\begin{equation}\label{inftyNormGoodPart}
        \|g\|_{\infty}\lesssim \lambda,
    \end{equation}
    \begin{equation}\label{L1NormBadPart}
        \sum_j \|b_{L_j}\|_{1}\lesssim \|f\|_{L^1(Q)},
    \end{equation}
    and if $R$ is a cube such that there exists $L_{j_0}\subset R$, then
    \begin{equation}\label{1.2_Jose}
        \langle |b|\rangle_{R}\lesssim\lambda.
    \end{equation}
\end{lemma}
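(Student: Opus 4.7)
The plan is to verify the three items sequentially, each time reducing to the hypothesis $\mathcal{M}(f)\one_{\Omega^c}\lesssim\lambda$ via the geometric information encoded in Theorem \ref{Whitney}. For \eqref{inftyNormGoodPart}, I would split $\R^d = \Omega^c \cup \bigcup_j L_j$: on $\Omega^c$ the good part coincides with $f$, and Lebesgue differentiation together with the hypothesis yields $g \lesssim \lambda$ a.e.\ there, while on each $L_j$ one has $g = \langle f \rangle_{L_j}$. To bound this average, use $\dist{L_j,\Omega^c} \leq 2 C_{\mathcal W}\ell(L_j)$ to produce a concentric enlargement $R_j$ of $L_j$ with $\ell(R_j) \sim \ell(L_j)$ meeting $\Omega^c$ at some $x_j$; then $\langle f \rangle_{L_j} \lesssim \langle f \rangle_{R_j} \leq \mathcal{M}f(x_j) \lesssim \lambda$. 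Item \eqref{L1NormBadPart} is a one-line computation: since $f \geq 0$, one has $|b_{L_j}| \leq (f + \langle f \rangle_{L_j})\one_{L_j}$, hence $\|b_{L_j}\|_1 \leq 2\int_{L_j} f$; summing and using the disjointness of the $L_j \subset \Omega \subset Q$ yields the estimate.

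The main work is concentrated in \eqref{1.2_Jose}, where the strategy is the same. Since $R \supset L_{j_0}$, the Whitney property applied to $L_{j_0}$ supplies a point $x_0 \in \Omega^c$ sitting inside a concentric enlargement $\tilde R$ of $R$ with $\ell(\tilde R) \sim \ell(R)$. Using $|b_{L_j}| \leq (f+\langle f \rangle_{L_j})\one_{L_j}$ once more,
\[
\int_R |b| \leq \sum_{L_j \cap R \neq \varnothing}\int_{L_j}|b_{L_j}| \leq 2\sum_{L_j \cap R \neq \varnothing}\int_{L_j} f.
\]
The key geometric observation, which turns this sum into an integral over a controlled region, is that any $L_j$ meeting $R$ (and hence meeting $\tilde R$, which contains $x_0 \in \Omega^c$) satisfies $\ell(L_j) \lesssim \ell(R)$: combining $\dist{L_j,\Omega^c} \leq \dist{L_j,x_0} \leq \sqrt{d}\,\ell(\tilde R)$ with the Whitney lower bound $(C_{\mathcal W}-\sqrt{d})\ell(L_j) \leq \dist{L_j,\Omega^c}$ yields the comparison. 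Hence all such $L_j$ lie in a fixed dilate $c\tilde R$, and disjointness gives
\[
\int_R |b| \leq 2\int_{c\tilde R} f \lesssim |c\tilde R|\,\mathcal{M}f(x_0) \lesssim |R|\,\lambda.
\]

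The main obstacle is the geometric bookkeeping in \eqref{1.2_Jose}: the ``enlarge until you hit $\Omega^c$ and invoke the bound on $\mathcal{M}f$ there'' principle that settles \eqref{inftyNormGoodPart} extends, but one must simultaneously control both the sizes of the Whitney cubes intersecting $R$ and the fact that their union sits inside a bounded dilate of $R$. Both ingredients are readable off Theorem \ref{Whitney}, but care is needed to ensure that only the geometric constants of the Whitney covering, and not $R$ itself, enter the final bound.
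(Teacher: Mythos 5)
Your proof is correct. For \eqref{inftyNormGoodPart} and \eqref{L1NormBadPart} it coincides with the paper's argument: $g\one_{\Omega^c}=f\one_{\Omega^c}\leq\mathcal{M}(f)\one_{\Omega^c}\lesssim\lambda$, the bound $\langle f\rangle_{L_j}\lesssim\langle f\rangle_{\mu L_j}\leq\mathcal{M}f(x_j)\lesssim\lambda$ via a bounded dilate of $L_j$ meeting $\Omega^c$, and the one-line $\|b_{L_j}\|_1\leq 2\|f\one_{L_j}\|_1$ summed over the disjoint $L_j\subset\Omega\subset Q$.

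For \eqref{1.2_Jose} you take a mildly different route. The paper keeps the integration restricted to $R\cap L_j$, so that the $f$-part collapses to $\frac{1}{|R|}\int_R f\lesssim\langle f\rangle_{\mu R}\leq\mathcal{M}f(x_i)\lesssim\lambda$ with no further geometry, and the average part is handled by the already-established $\langle f\rangle_{L_j}\lesssim\lambda$ together with $\sum_j|R\cap L_j|\leq|R|$. You instead enlarge each intersecting Whitney cube to all of $L_j$ and pay for it with the geometric lemma that any $L_j$ meeting $R$ has $\ell(L_j)\lesssim\ell(R)$ (via $(C_{\mathcal{W}}-\sqrt{d})\ell(L_j)\leq\dist{L_j,\Omega^c}\leq\dist{L_j,x_0}\leq\sqrt{d}\,\ell(\tilde R)$), hence $\bigcup\{L_j: L_j\cap R\neq\varnothing\}\subset c\tilde R$, after which disjointness and $\mathcal{M}f(x_0)\lesssim\lambda$ finish the job. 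Both arguments are sound; the paper's is slightly leaner because it never needs the size comparison between $L_j$ and $R$, while yours makes explicit the (true and sometimes useful) fact that the Whitney cubes touching $R$ live at scale $\lesssim\ell(R)$ and inside a fixed dilate of $R$. One small point worth stating explicitly in your write-up: the identity $|b|=\sum_j|b_{L_j}|$, which you use to pass from $\int_R|b|$ to $\sum_{L_j\cap R\neq\varnothing}\int_{L_j}|b_{L_j}|$, holds because the $b_{L_j}$ have pairwise disjoint supports.
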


\begin{proof}
We start proving \eqref{inftyNormGoodPart}. Since the sets $\Omega^{c}$ and $\{L_j\}_j$ are all pairwise disjoint, we first use \eqref{propiedadMaximalEnElAbierto} to get
\[
g\one_{\Omega^c}
=
f\one_{\Omega^c}
\leq
\mathcal{M}(f)\one_{\Omega^c}
\lesssim
\lambda,
\]
On the other hand, by Theorem \ref{Whitney}, there exists $\mu\sim 1$ such that $\mu L_j\cap \Omega^{c}\neq\varnothing$. Therefore, 
    \[
    \langle f\rangle_{L_j}
    \lesssim 
    \langle f\rangle_{\mu L_j}
    \leq 
    \mathcal{M}f(x_j)\quad \text{for some }x_j\in (\mu L_j)\cap\Omega^{c}.
    \]
By \eqref{propiedadMaximalEnElAbierto}, $\mathcal{M}f(x_j)\lesssim \lambda$, so $\langle f\rangle_{L_j}\lesssim \lambda$. Therefore, 
\[
g\one_{L_j}
=
\langle f\rangle_{L_j}\one_{L_j}
\lesssim 
\lambda.
\]
To prove \eqref{L1NormBadPart}, we just compute
\[
\|b_{L_j}\|_{L^1}
=
\|(f-\langle f\rangle_{L_j})\one_{L_j}\|_{L^1}
\leq 
\|f\one_{L_j}\|_{L^1}
+
\langle f\rangle_{L_j}\|\one_{L_j}\|_{L^1}
=
2\|f\one_{L_j}\|_{L^1}.
\]
Since the $\{L_j\}$ are pairwise disjoint and $\bigcup L_j=\Omega\subseteq Q$, we have
\[
\sum_j \|b_{L_j}\|_{1}
\leq 
\sum 2\|f\one_{L_j}\|_{L^1}
=
2\int_{\bigcup L_j} |f|
\leq
2\|f\|_{L^1(Q)}.
\]
To prove \eqref{1.2_Jose}, let $R$ be such that $L_{j_0}\subset R$ for some $j_0$. We have
\begin{align*}
    \frac{1}{|R|}\int_{R}|b(x)|\, dx
    &=
    \frac{1}{R}\int_{R}\left|\sum (f(x)-\langle f\rangle_{L_j})\one_{L_j}(x)\right|\, dx
    \\&\leq 
    \frac{1}{|R|}\sum_{j}
    \int_{R\cap L_j}|f(x)|\, dx
    +
    \frac{1}{|R|}\sum_{j}\langle f\rangle_{L_j}|R\cap L_j|
\end{align*}
Since $\mu L_{j}\cap \Omega^{c}\neq\varnothing$ for every $j$ and $L_{j_0}\subset R$, then $\mu R\cap \Omega^{c}\neq \varnothing$. So if $x_i\in \mu R\cap\Omega^{c}$, then 
\[
\frac{1}{|R|}\sum_{j}\int_{R\cap L_j}|f|
\leq
\frac{1}{|R|}\int_{R}|f|
\lesssim 
\frac{1}{|\mu R|}\int_{\mu R}|f|
\leq 
\mathcal{M}f(x_i)
\lesssim
\lambda.
\]
To bound the other sum, we just use that $\langle f\rangle_{L_j}\lesssim \lambda$ and the disjointness of $\{L_j\}$:
\[
\frac{1}{|R|}\sum_{j}\langle f\rangle_{L_{j}}|R\cap L_{j}|
\lesssim 
\frac{1}{|R|}\sum_{j}\lambda|R\cap L_{j}|
\leq  
\lambda\frac{1}{|R|}|R|
=
\lambda.    \qedhere
\]
\end{proof}

\subsection{Dini-smooth Calder\'on-Zygmund operators} A Calder\'on-Zygmund operator  (abbreviated CZO) is a linear operator $T$, initially defined over Schwartz functions, such that:
    \begin{itemize}
        \item $T$ can be extended to a bounded linear operator on $L^2(\mathbb{R}^d)$.
        \item For all $f\in L^\infty_c(\R^d)$, $Tf$ is given in integral form by a kernel $K\colon \mathbb{R}^d\times \mathbb{R}^d\setminus\{(x,y): x=y\}\to \mathbb{C}$, in the sense that
        \[
        Tf(x)
        = \int_{\mathbb{R}^d}K(x,y)f(y)\, dy, \quad \text{ for all } x\not\in \operatorname{supp}(f).
        \]
        \item $K$ satisfies the size condition
        $$
        |K(x,y)| \lesssim \frac{1}{|x-y|^d}, \quad x\neq y.
        $$
    \end{itemize}
We assume additional smoothness on $K$: a modulus of continuity $\omega$ is a function $\omega\colon [0,1]\to \mathbb{R}^{+}$ which is continuous and non-decreasing. We say that it satisfies the Dini condition if 
$$
\int_{0}^{1}\frac{\omega(t)}{t}\, dt<\infty.
$$
We require that there exists such a modulus of continuity $\omega$ so that, if $2|x-x'|\leq |x-y|$, then
\[
|K(x,y)-K(x',y)|+|K(y,x)-K(y,x')|\lesssim \frac{1}{|x-y|^d}\omega\left(\frac{|x-x'|}{|x-y|}\right).
\]
We break $K$ into scales using a standard Littlewood-Paley type decomposition. Let $\phi\colon \R^d\to[0,1]$ be a radial function in $C_c^{\infty}(\R^d)$ with $\supp{\phi}\subset \left\{x\in \mathbb{R}^d: 0<|x|<2\right\}$ and such that $\phi\equiv 1$ on $\{x \in \mathbb{R}^d: 0 \leq|x| \leq 1\}$. We set $\psi(x)\coloneqq \phi(x)-\phi(2 x)$, so that $\supp{\psi}\subset \left\{x\in \mathbb{R}^d: 1/2<|x|<2\right\}$. The family $\left\{\operatorname{Dil}_{2^k}^{\infty} \psi\right\}_{k \in \mathbb{Z}}=\left\{\psi\left(\frac{\cdot}{2^k}\right)\right\}_{k \in \mathbb{Z}}$ forms a partition of unity, in the sense that
\begin{equation*}
\sum_{k \in \mathbb{Z}} \psi\left(\frac{x}{2^k}\right)=1, \quad x \neq 0 . 
\end{equation*}
We next truncate $K$ using $\psi$. Set
$$
K_s(x,y) \coloneqq \psi\left(\frac{x-y}{2^s}\right)K(x,y), \quad s\in \Z,
$$
and define the smooth single-scale operator
$$
T_{s_0}f(x) \coloneqq \int_{\R^d}K_{s_0}(x,y)f(y)\, dy,\quad x\in\R^d,   
$$
and the smooth multi-scale truncations
$$
T_{(s_0)}f(x)\coloneqq \int_{\R^d}\left(\sum_{s\geq s_0} K_s(x,y)\right)f(y)\, dy, \quad T^{(s_0)}f \coloneqq Tf-T_{(s_0+1)}f.
$$
The integrals defining $T_{(s_0)}f$ and $T_s f$ converge absolutely for every $f\in L^p(\R^d)$, $1\leq p<\infty$, by Hölder's inequality. Therefore, the truncations are well-defined in the pointwise sense. For each $s_0\in \Z$, the operators $T_{s_0}$, $T_{(s_0)}$ and $T^{(s_0)}$ are Calder\'on-Zygmund operators with kernels $K_{s_0}(x,y)$, $\sum_{s\geq s_0}K_s(x,y)$ and $\sum_{s\leq s_0}K_s(x,y)$ respectively, with Dini smoothness given by the modulus of continuity $\widetilde{\omega}(t)\coloneqq t+\omega(t)$. Moreover, all these operators are bounded in $L^2(\R^d)$ uniformly with respect to $s_0\in\Z$. The adjoint operator of $T_s$ is given by
\[
(T_s)^{\ast}g(x)
\coloneqq 
\int_{\R^d}K_{s}(y,x)g(y)\, dx.
\]
Therefore, $(T_s)^{\ast}$ has the same structure as $T_s$. $T_s$ satisfies the $L^p$-improving-type inequality
\begin{equation}\label{singleScale1infty}
\|T_s(f)\|_{\infty} \lesssim_{d} 2^{-sd}\|f\|_{L^1}.    
\end{equation}
Indeed, if $x\in\R^d$, we can apply the size condition to obtain
\[
|T_s(f)(x)|
=
\left|
\int_{\R^d}K_s(x,y)f(y)\, dy
\right|
\leq 
\int_{|x-y|\in [2^{s-1},2^{s+1}]}\frac{|f(y)|}{|x-y|^d}\, dy
\lesssim 
2^{-sd}\|f\|_{L^1}.
\]
The same estimate is valid for $(T_s)^{\ast}$. We end this section with two important localization estimates. Given $Q_0 \in \D$, define $s_{Q_0} =\log_2(\ell(Q_0))$ and denote $S(Q_0) = s_{Q_0} + k(d)$. For $s > S(Q_0)$, we define the annulus
\begin{equation*}
A_{s}(Q_0) \coloneqq
\left\{x\in\R^d: |x-x_{Q_0}|\in [2^{-2}\cdot 2^{s},2^{k(d)}\cdot 2^{s}]
\right\},
\end{equation*}
where $x_Q$ denotes, here and throughout, the center of a cube $Q$.

\begin{lemma}\label{lemmaAnnuliDecomposition}
    Let $Q_0\in\D$ and $s>S(Q_0)$. Then 
    \begin{equation}\label{annuliProperty1}
        \text{if }\quad K_s(x,y)\neq 0\quad \text{ for }\quad y\in Q_0,\quad \text{ then }\quad  x\in A_s(Q_0).
    \end{equation}
    As a consequence, $\int_{Q_0} K_s(x,y)f(y)\, dy\neq 0$ only if $x\in A_s(Q_0)$. Therefore
    \begin{equation}\label{annuliDecomposition}
    Tf(x)
    =
    T^{(S(Q_0))}f(x)
    +
    \sum_{s>S(Q_0)}
    \left(\int_{Q_0} K_s(x,y)f(y)\, dy\right) \one_{A_{s}(Q_0)}(x).
    \end{equation}
\end{lemma}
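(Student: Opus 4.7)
The plan is to establish \eqref{annuliProperty1} as the main geometric claim --- essentially a triangle inequality argument calibrated by the choice of $k(d)$ --- and then deduce the integral consequence and \eqref{annuliDecomposition} formally from the Littlewood--Paley splitting of $K$ introduced above.

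For the proof of \eqref{annuliProperty1}, the starting point is the support condition on $\psi$: since $\supp{\psi}\subset\{1/2<|z|<2\}$, the nonvanishing $K_s(x,y)\neq 0$ forces $|x-y|\in(2^{s-1},2^{s+1})$. If additionally $y\in Q_0$, then $|y-x_{Q_0}|\leq \tfrac{\sqrt d}{2}\ell(Q_0)=\sqrt d\cdot 2^{s_{Q_0}-1}$. The definition of $k(d)$ (smallest positive integer with $2^{k(d)-1}>\sqrt d$) is chosen precisely to make this secondary term negligible at scale $2^s$: since $s,S(Q_0)\in\Z$ and $s>S(Q_0)$, we have $s\geq s_{Q_0}+k(d)+1$, hence
\[
|y-x_{Q_0}|<2^{k(d)-1}\cdot 2^{s_{Q_0}-1}=2^{s_{Q_0}+k(d)-2}\leq 2^{s-3}.
\]
Two applications of the triangle inequality then yield
\[
|x-x_{Q_0}|>2^{s-1}-2^{s-3}>2^{s-2},\qquad |x-x_{Q_0}|<2^{s+1}+2^{s-3}<2^{s+2}\leq 2^{k(d)}\cdot 2^s,
\]
the last inequality using $k(d)\geq 2$, which is forced by $2^{k(d)-1}>\sqrt d\geq 1$. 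Hence $x\in A_s(Q_0)$.

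The ``as a consequence'' assertion is immediate: if $x\notin A_s(Q_0)$, then by the contrapositive of \eqref{annuliProperty1} the integrand $K_s(x,y)f(y)$ vanishes identically in $y\in Q_0$. For \eqref{annuliDecomposition}, I would split $Tf=T^{(S(Q_0))}f+T_{(S(Q_0)+1)}f$ according to the definitions given above and expand
\[
T_{(S(Q_0)+1)}f(x)=\sum_{s>S(Q_0)}\int_{Q_0}K_s(x,y)f(y)\,dy,
\]
where the restriction to $Q_0$ reflects the implicit hypothesis, used throughout the applications of this lemma, that $f$ is supported on $Q_0$. The insertion of $\one_{A_s(Q_0)}(x)$ in each summand is then harmless by what was just proved.

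The main obstacle --- indeed essentially the only one --- is the constant bookkeeping sketched above: one must verify that the choice of $k(d)$, made to satisfy $2^{k(d)-1}>\sqrt d$, is \emph{simultaneously} strong enough to recover both the inner radius $2^{s-2}$ and the outer radius $2^{k(d)+s}$ specified in the definition of $A_s(Q_0)$, with enough slack absorbed into the $2^{s-3}$ error term from the diameter of $Q_0$.
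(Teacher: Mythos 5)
Your argument is correct and follows essentially the same route as the paper's proof: the support condition on $\psi$ gives $|x-y|\in[2^{s-1},2^{s+1}]$, the bound $|y-x_{Q_0}|<2^{s_{Q_0}+k(d)-2}$ comes from the definition of $k(d)$, and the two triangle inequalities land in $A_s(Q_0)$ exactly as in the paper (which absorbs the error term via the parameter $\delta=s-S(Q_0)\geq 1$ rather than your cleaner $2^{s-3}$ bound, but the computation is the same). Your remark that \eqref{annuliDecomposition} implicitly uses $\supp{f}\subset Q_0$ is a fair and accurate reading of how the lemma is applied.
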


\begin{proof}
    Let $s>S(Q_0)$, that is, $s=s_{Q_0}+k(d)+\delta$ for some integer $\delta\geq 1$. Notice that $K_s(x,y)\neq 0$ implies that $|x-y|\in [2^{s-1},2^{s+1}]$.
    Notice also that, since $y\in Q_0$, then 
    \[
    |x_{Q_0}-y|
    \leq
    \frac{\diam{Q_0}}{2}
    =
    \frac{2^{s_{Q_0}}\sqrt{d}}{2}
    <
    2^{s_{Q_0}+k(d)-2}.
    \]
    Therefore, we obtain the upper bound
    \begin{align*}
        |x-x_{Q_0}|
        \leq 
        |x-y|+|y-x_{Q_0}|
        < 
        2^{s+1}+2^{s_{Q_0}+k(d)-2}
        \leq
        2^{s_{Q_0}+k(d)+\delta}(2+2^{-2})
        \leq 
        2^{s}\cdot 2^{k(d)},
    \end{align*}
    where the last step is true because $k(d)\geq 2$ for $d\geq 1$. We also obtain the lower bound
    \begin{align*}
        |x-x_{Q_0}|
        \geq 
        |x-y|-|x_{Q_0}-y|
        \geq 
        2^{s-1}-2^{s_{Q_0}+k(d)-2}
        =
        2^{s_{Q_0}+k(d)+\delta}[2^{-1}-2^{-2-\delta}]
        \geq 
        2^{s}\cdot 2^{-2}.
    \end{align*}
\end{proof}

\begin{lemma}\label{localizationTs}
    Let $f\in L^1(\R^d)$ and $Q\in \D$. Then 
    \begin{equation}\label{localizedSupport}
    \supp{T^{(S(Q))}(f\one_{Q})}
    \subset 
    \Lambda_dQ.    
    \end{equation}
\end{lemma}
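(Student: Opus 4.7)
The plan is to argue by contrapositive: I would assume $x \notin \Lambda_d Q$ and show that $T^{(S(Q))}(f\one_Q)(x)=0$. Since $\Lambda_d > 1$, such a point $x$ lies outside $Q$ (and hence outside $\supp{f\one_Q}$), so the pointwise integral representation of the Calder\'on-Zygmund operator $T^{(S(Q))}$ applies:
\[
T^{(S(Q))}(f\one_Q)(x) = \int_{Q} \sum_{s \leq S(Q)} K_s(x,y)\, f(y)\, dy.
\]
It is thus enough to prove that $K_s(x,y) = 0$ for every $y\in Q$ and every integer $s\leq S(Q)$.

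To do this, I would fix such $y$ and $s$, and assume toward a contradiction that $K_s(x,y)\neq 0$. The support condition $\supp{\psi}\subset\{1/2<|\cdot|<2\}$ used to build $K_s$ then forces $|x-y|\leq 2^{s+1}\leq 2^{S(Q)+1}=2^{s_Q+k(d)+1}$. Combining this with $|y-x_Q|_\infty \leq \ell(Q)/2 = 2^{s_Q-1}$ (since $y\in Q$) through the triangle inequality for the $\ell^\infty$-norm and the elementary bound $|\,\cdot\,|_\infty \leq |\,\cdot\,|$, I compute
\[
|x-x_Q|_\infty \leq 2^{s_Q+k(d)+1} + 2^{s_Q-1} = 2^{s_Q-1}\bigl(2^{k(d)+2}+1\bigr) = \frac{\Lambda_d}{2}\,\ell(Q),
\]
which says precisely that $x\in\Lambda_d Q$, contradicting the starting assumption.

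I do not expect a serious obstacle: the integers $k(d)$ and $\Lambda_d = 2^{k(d)+2}+1$ are evidently engineered so that this $\ell^\infty$ triangle inequality closes with no slack, the leading term $2^{k(d)+1}$ coming from the largest admissible scale $s=S(Q)$ and the summand $1$ absorbing the contribution of $\ell(Q)/2$ from the diameter of $Q$. The only minor delicate point is legitimizing the pointwise integral representation at an $x$ outside $Q$; this is standard since $T^{(S(Q))}$ is itself a Calder\'on-Zygmund operator, so its action on the compactly supported function $f\one_Q$ is given by integration against its kernel $\sum_{s\leq S(Q)}K_s$ off the support of the input.
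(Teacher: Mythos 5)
Your proposal is correct and follows essentially the same argument as the paper: both reduce to the observation that $K_s(x,y)\neq 0$ forces $|x-y|<2^{s+1}\leq 2^{S(Q)+1}=2^{k(d)+2}\cdot\ell(Q)/2$, and then close the estimate with the same $\ell^\infty$ triangle inequality that makes $\Lambda_d=2^{k(d)+2}+1$ the right dilation factor. The only difference is presentational (you argue by contrapositive/contradiction, the paper determines the admissible dilation $\alpha$ directly), and your justification of the off-support integral representation matches the paper's use of the absolutely convergent truncated kernels.
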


\begin{proof}
    Let $x\not\in \alpha Q$ for some $\alpha>1$. Then, $x\not\in\supp{f\one_Q}$, so
    \[
    T^{(s_Q)}(f\one_{Q})(x)
    =
    T(f\one_{Q})(x)
    -
    T_{(s_Q)}(f\one_{Q})(x)
    =
    \int_{Q}\left(\sum_{s\leq s_Q}K_s(x,y)\right)f(y)\, dy.
    \]
Since $K_s(x,y)=\psi(\frac{x-y}{2^s})K(x,y)$ and $\supp{\psi(\frac{\cdot}{2^s})}\subset [2^{s-1},2^{s+1}]$, we want to choose $\alpha$ in such a way that we guarantee $|x-y|>2^{S(Q)+1}$, so that $|x-y|>2^{s+1}$ for every $s\leq S(Q)$ and hence $T^{(S(Q))}(f\mathds{1}_{Q})(x)=0$. For every $y\in Q$ and $x\not\in\alpha Q$, we have 
    \[
    |x-y|
    \geq
    \frac{1}{2}\ell(\alpha Q)-\frac{1}{2}\ell(Q)
    =
    \frac{1}{2}\ell(Q)[\alpha-1]
    =
    2^{s_Q-1}[\alpha-1].
    \]
So to assure that $|x-y|\geq 2^{S(Q)+1}$ for every $x\not\in \alpha Q$ and $y\in Q$, it is enough that
    \[
    2^{s_Q-1}[\alpha-1]
    \geq
    2^{S(Q)+1}
    \iff 
    \alpha-1
    \geq
    2^{k(d)+2}
    \iff 
    \alpha 
    \geq
    2^{k(d)+2}+1
    =
    \Lambda_d.\qedhere
    \]
\end{proof}

\section{Sparse domination} \label{sec:sec2}

\subsection{The main estimate} The proof of Theorem \ref{theoremA} is based on the following lemma, that will be applied in an iterative way.

\begin{lemma}\label{lemma1.1_Jose}
Let $Q\in \D$. Let $0\leq f_1,f_2\in L^\infty(\R^d)$ with $\supp{f_1}\subset Q$ and $\supp{f_2}\subset \Lambda_dQ$. Let $\mathcal{P}=\left\{L_j\right\}_{j\in J}$ be the family of Whitney cubes of some open set $\Omega\subset 2\Lambda_dQ$ chosen with Whitney constant $C_{\mathcal{W}}>\sqrt{d}(2\Lambda_d+1)$, and suppose that
    \[
    (\mathcal{M}f_i) \one_{\Omega^c} \lesssim \left\langle f_i\right\rangle_{\Lambda_dQ},\quad i=1,2.
    \]
Then
    \[
    \left|\left\langle T^{(S(Q))} (f_1), f_2\right\rangle\right|
    \leq
    C\left\langle f_1\right\rangle_{\Lambda_dQ}\left\langle f_2\right\rangle_{\Lambda_dQ}|\Lambda_dQ|
    +
    \sum_{j\in J}\left|\left\langle T^{(S(L_j))} (f_1 \one_{L_j}), f_2\right\rangle\right|.
    \]
\end{lemma}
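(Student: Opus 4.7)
The plan is to split $f_1$ via the Whitney family and apply a scale decomposition of $T^{(S(Q))}$ on each Whitney piece, isolating the target terms $T^{(S(L_j))}(f_1\one_{L_j})$. Using the telescoping $T^{(S(Q))}=T^{(S(L_j))}+\sum_{s=S(L_j)+1}^{S(Q)}T_s$ on each Whitney piece, I would write
\begin{align*}
\langle T^{(S(Q))} f_1, f_2\rangle
&= \langle T^{(S(Q))}(f_1\one_{\Omega^c}), f_2\rangle
+ \sum_j \langle T^{(S(L_j))}(f_1\one_{L_j}), f_2\rangle\\
&\quad + \sum_j \sum_{s=S(L_j)+1}^{S(Q)}\langle T_s(f_1\one_{L_j}), f_2\rangle.
\end{align*}
The middle sum is the quantity to be retained; the first and third terms must be absorbed into $C\langle f_1\rangle_{\Lambda_dQ}\langle f_2\rangle_{\Lambda_dQ}|\Lambda_dQ|$.

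For the outer error, the hypothesis gives $|f_1\one_{\Omega^c}|\lesssim \langle f_1\rangle_{\Lambda_dQ}$, and Lemma \ref{localizationTs} confines $T^{(S(Q))}(f_1\one_{\Omega^c})$ to $\Lambda_dQ$. A Calder\'on--Zygmund decomposition $f_2=g_2+b_2$ via Lemma \ref{CZdecompProperties} splits the pairing. The good-part pairing is handled by Cauchy--Schwarz together with the uniform $L^2$-boundedness of $T^{(S(Q))}$, using the crude bounds $\|f_1\one_{\Omega^c}\|_2^2\lesssim \|f_1\one_{\Omega^c}\|_\infty\|f_1\|_1\lesssim\langle f_1\rangle_{\Lambda_dQ}^2|\Lambda_dQ|$ and the analogue for $g_2$. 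The bad-part pairing is transferred to the adjoint: the mean zero of each $b_{2,L_j}$ against the Dini smoothness of the kernel of $T^{(S(Q))}$, combined with the Whitney separation $\dist{x_{L_j},\Omega^c}\gtrsim \ell(L_j)$, produces a kernel whose radial integral over $\Omega^c$ is finite by the Dini condition; summing via \eqref{L1NormBadPart} closes this estimate.

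For the scale-remainder error I would split $f_1\one_{L_j}=b_{1,L_j}+\langle f_1\rangle_{L_j}\one_{L_j}$. On the mean-zero piece, subtracting $K_s(x,x_{L_j})$ inside the kernel integral and using Dini smoothness in $y$ gives $|T_s b_{1,L_j}(x)|\lesssim 2^{-sd}\widetilde\omega(\ell(L_j)/2^s)\|b_{1,L_j}\|_1$ for $x\in A_s(L_j)$. Pairing with $f_2$ and using $\langle f_2\rangle_{A_s(L_j)}\lesssim \langle f_2\rangle_{\Lambda_dQ}$ (which follows from the Whitney distance estimate placing $\Omega^c$ within a bounded enlargement of $A_s(L_j)$, combined with the hypothesis on $\mathcal{M}f_2$), summing in $s$ by the Dini condition, and summing in $j$ via $\sum_j \langle f_1\rangle_{L_j}|L_j|\leq \|f_1\|_1\lesssim \langle f_1\rangle_{\Lambda_dQ}|\Lambda_dQ|$ dispatches this case.

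The hardest step will be the constant piece $\langle f_1\rangle_{L_j}\one_{L_j}$, which lacks mean zero. For this I would further decompose $T_s\one_{L_j}(x)=|L_j|K_s(x,x_{L_j})+\int_{L_j}[K_s(x,y)-K_s(x,x_{L_j})]\,dy$; the second summand is again controlled by Dini smoothness in $y$. For the first summand I decompose $f_2=g_2+b_2$: the pairing with each $b_{2,L_k}$ uses mean zero of $b_{2,L_k}$ against Dini smoothness of $K_s$ in $x$ to extract a factor $\widetilde\omega(\ell(L_k)/2^s)$, and the double sum over $(j,k,s)$ converges by the disjointness bound $\sum_{j:\,L_k\cap A_s(L_j)\neq\varnothing}|L_j|\lesssim 2^{sd}$ combined with the Dini condition and \eqref{L1NormBadPart}. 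The $g_2$-pairing is the most delicate, requiring the CZ-type cancellation of the band operator $T^{(S(Q))}-T^{(S(L_j))}$ against the bounded $g_2$, with Whitney bounded overlap of enlargements controlling the off-diagonal interactions; this bookkeeping is the technical heart of the argument.
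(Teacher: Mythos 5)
Your decomposition is, after rearrangement, the same as the paper's: writing $f_1=f_1\one_{\Omega^c}+\sum_j f_1\one_{L_j}$ and then splitting each Whitney piece into $b_{1,L_j}$ plus $\langle f_1\rangle_{L_j}\one_{L_j}$ is exactly the Calder\'on--Zygmund decomposition of Lemma \ref{CZdecompProperties}, and your three error terms regroup the paper's four pairings $\mathrm{I}$--$\mathrm{IV}$. Your treatment of the outer error and of the mean-zero piece $b_{1,L_j}$ (estimating $T_sb_{1,L_j}$ pointwise on $A_s(L_j)$ and pairing directly with $f_2$ via the maximal-function hypothesis) is sound, and in fact merges the paper's terms $\mathrm{III}$ and $\mathrm{IV}$ into one cleaner step.

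The gap is in the constant piece. When you pair $T_s(\langle f_1\rangle_{L_j}\one_{L_j})$ with $b_{2,L_k}$ and claim to ``extract a factor $\widetilde\omega(\ell(L_k)/2^s)$'' from the mean zero of $b_{2,L_k}$, you are using the smoothness hypothesis, which requires $2|x-x_{L_k}|\leq |x_{L_k}-y|$, i.e.\ $\ell(L_k)\lesssim 2^{s}$. The scales run over $S(L_j)<s\leq S(Q)$, and nothing prevents a Whitney cube $L_k$ with $\ell(L_k)\gtrsim 2^{s}$ from meeting $A_s(L_j)$; for such pairs the cancellation estimate is false and $\widetilde\omega$ is not even defined at arguments exceeding $1$, so neither your smoothness factor nor its summation in $s$ against the Dini integral is available. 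This is precisely the case the paper isolates as the first sum in \eqref{II_2LoQueHayQueEstimar}, and it is resolved not by smoothness but by geometry: $\langle\one_{L_j},(T_s)^{\ast}b_{2,L_k}\rangle\neq 0$ with $s\leq S(L_k)$ forces $L_j\cap\Lambda_dL_k\neq\varnothing$, and the Whitney separation then forces $\ell(L_k)\sim\ell(L_j)$ and $L_k\subset\mu L_j$, so only boundedly many pairs $(k,s)$ contribute for each $j$, each controlled by the single-scale bound \eqref{singleScale1infty}. You need this step or an equivalent one. By contrast, the $g_2$-pairing that you defer as ``the technical heart'' is the easy part: do not telescope it scale by scale, but resum over $j$ to recognize it as $\langle T^{(S(Q))}(g_1\one_{\Omega}),g_2\rangle-\sum_j\langle T^{(S(L_j))}(g_1\one_{L_j}),g_2\rangle$, and then apply the uniform $L^2$-boundedness of the truncations together with $\|g_i\|_\infty\lesssim\langle f_i\rangle_{\Lambda_dQ}$ and the localization of Lemma \ref{localizationTs}; no cancellation is required there.
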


\begin{proof}
    Let 
    \[
    T_{\mathcal{P}}f(x)
    \coloneqq 
    \sum_{j\in J} T^{(S(L_j))}(f\one_{L_j}).
    \]
    Then we can split
    \begin{align*}
        \langle T^{(S(Q))}(f_1),f_2\rangle 
        &=
        \langle T^{(S(Q))}(f_1)-T_{\mathcal{P}}(f_1),f_2\rangle 
        +
        \langle T_{\mathcal{P}}(f_1),f_2\rangle 
        \\&=
        \langle T^{(S(Q))}(f_1)-T_{\mathcal{P}}(f_1),f_2\rangle 
        +
        \sum_{j\in J}\langle T^{(S(L_j))}(f_1\one_{L_j}),f_2\rangle.
    \end{align*}
    It is enough to prove the bound
    \[
    |\langle T^{(S(Q))}(f_1)-T_{\mathcal{P}}(f_1),f_2\rangle|
    \lesssim 
    \langle f_1\rangle_{\Lambda_dQ}\langle f_2\rangle_{\Lambda_dQ}|\Lambda_dQ|.
    \]
    We split the left-hand side using the Calder\'on-Zygmund decomposition from Lemma \ref{CZdecompProperties}, applied to each function $f_i$ and the collection $\mathcal{P}$ (corresponding to a level $\lambda_i= \langle f_i\rangle_{\Lambda_dQ}$). By the linearity of the operators involved, we get
    \begin{align*}
        \langle (T^{(S(Q))}-T_{\mathcal{P}})(f_1),f_2\rangle
        &=
        \langle (T^{(S(Q))}-T_{\mathcal{P}})(g_1),g_2\rangle
        +
        \langle (T^{(S(Q))}-T_{\mathcal{P}})(g_1),b_2\rangle
        \\&\phantom{=}+
        \langle (T^{(S(Q))}-T_{\mathcal{P}})(b_1),g_2\rangle
        +
        \langle (T^{(S(Q))}-T_{\mathcal{P}})(b_1),b_2\rangle
        \\&\eqqcolon 
        \mathrm{I}+\mathrm{II}+\mathrm{III}+\mathrm{IV}.
    \end{align*}

We estimate the four terms in turn.

\noindent \textit{Estimate of $\mathrm{I}$:} First we split
\[
    |\mathrm{I}|
    \leq
    |\langle T^{(S(Q))}(g_1),g_2\rangle|
    +
    |\langle T_{\mathcal{P}}(g_1),g_2\rangle|
    \leq 
    |\langle T^{(S(Q))}g_1,g_2\rangle|
    +
    \sum_{j}|\langle T^{(S(L_j))}(g_1\one_{L_j}),g_2\rangle|.
\]
For the first term, we use Cauchy-Schwarz, the uniform $L^2$-boundedness of $\{T^{(s)}\}_{s\in\Z}$ and \eqref{inftyNormGoodPart}:
\begin{align*}
|\langle T^{(s_Q)}(g_1),g_2\rangle|
\lesssim_{T}
\|g_1\|_2\|g_2\|_2 
\leq
\|g_1\|_{\infty}|\Lambda_dQ|^{\frac{1}{2}} \|g_2\|_{\infty}|\Lambda_dQ|^{\frac{1}{2}}
\lesssim
\langle f_1\rangle_{\Lambda_dQ}\langle f_2\rangle_{\Lambda_dQ}|\Lambda_dQ|.
\end{align*}
    For the second one, using the localization of Lemma \ref{localizationTs}, we have
    \[
    |\langle T^{(S(L_j))}(g_1\one_{L_j}),g_2\rangle|
    =
    |\langle T^{(S(L_j))}(g_1\one_{L_j}),g_2\one_{\Lambda_dL_j}\rangle|,
    \]
    and so
    \[
    |\langle T^{(S(L_j))}(g_1\one_{L_j}),g_2\one_{\Lambda_dL_j}\rangle|
    \leq
    \|T^{(S(L_j))}(g_1\one_{L_j})\|_{2}\|g_2\one_{\Lambda_dL_j}\|_{2}
    \lesssim 
    \langle f_1\rangle_{\Lambda_dQ}\langle f_2\rangle_{\Lambda_dQ}|L_j|,
    \]
    arguing as before. Since $\{L_j\}$ are disjoint with $\bigcup L_j=\Omega\subseteq 2\Lambda_dQ$, we have
    \begin{align*}
        \sum_{j}|\langle T^{(S(L_j))}(g_1\one_{L_j}),g_2\rangle|
        &\lesssim 
        \sum_{j}\langle f_1\rangle_{\Lambda_dQ}\langle f_2\rangle_{\Lambda_dQ}|L_j|
        \lesssim
        \langle f_1\rangle_{\Lambda_dQ}\langle f_2\rangle_{\Lambda_dQ}|\Lambda_dQ|.
    \end{align*}

\noindent \textit{Estimate of $\mathrm{II}$:} Since $g_1\one_{L_j}=\langle f_1\rangle_{L_j}$, we have
    \begin{align*}
        \mathrm{II}
        &=
        \langle (T^{(S(Q))}-T_{\mathcal{P}})(g_1),b_2\rangle 
        \\&=
        \langle T^{(S(Q))}(f_1\one_{\Omega^{c}}),b_2\rangle 
        +
        \langle T^{(S(Q))}(g_1\one_{\bigcup L_j}),b_2\rangle 
        -
        \langle T_{\mathcal{P}}(g_1),b_2\rangle
        \\&=
        \langle T^{(S(Q))}(f_1\one_{\Omega^{c}}),b_2\rangle 
        +
        \sum_{j}\langle (T^{(S(Q))}-T^{(S(L_j))})(\langle f_1\rangle_{L_j}\one_{L_j}),b_2\rangle
        \\&\eqqcolon 
        \mathrm{II}_1+\mathrm{II}_2.
    \end{align*}
    We begin with $\mathrm{II}_1$. To use the smoothness of the kernel, we pass to the adjoint to take advantage of the mean $0$ atoms $b_{i,L_j}$. We have 
    \[
    \mathrm{II}_1
    =
    \langle f_1\one_{\Omega^{c}},(T^{(S(Q))})^{\ast}(b_2)\rangle 
    =
    \sum_{j}\int_{\Omega^c}f_1(x)\left(\int_{\R^d}\left(\sum_{s\leq S(Q)}K_s(y,x)\right)b_{2,L_j}(y)\, dy\right)\, dx,
    \]
    using the integral expression of $T$. Since $C_{\mathcal{W}}>\sqrt{d}(2\Lambda_d+1)=\sqrt{d}(2^{k(d)+3}+2)$, 
        \[
        \dist{L_j,\Omega^c}\geq (C_{\mathcal{W}}-\sqrt{d})\ell(L_j)>2^{S(L_j)+1}.
        \]
        This means that $|x-y|>2^{S(L_j)+1}$ for every $x\in\Omega^c$ and $y\in L_j$. By definition, $K_s(x,y)=0$ for all $s\leq S(L_j)$ and $ x\in\Omega^c$, $y\in L_j$. Therefore, the sum in $\mathrm{II}_1$ can truncated accordingly, and due to the support and mean $0$ properties of $b_{2,L_j}$, we obtain
        \begin{align}\label{estimateII1_formula1}
        \nonumber|\mathrm{II}_1|
        &\leq 
        \sum_j\sum_{s=S(L_j)+1}^{S(Q)}\int_{\Omega^c}|f_1(x)|\left(\int_{L_j}|K_s(y,x)-K_s(x_{L_j},x)|\cdot |b_{2,L_j}(y)|\, dy\right)\, dx
        \\\nonumber &=
        \sum_j\sum_{s=S(L_j)+1}^{S(Q)}\left(\int_{L_j}|b_{2,L_j}(y)|\, dy\right)\left(\int_{\Omega^{c}}|f_1(x)|\sup_{z\in L_j}|K_s(z,x)-K_s(x_{L_j},x)|\, dx\right)
        \\&\lesssim 
        \langle f_1\rangle_{\Lambda_dQ}
        \sum_j \|b_{2,L_j}\|_{L^1}\sum_{s=S(L_j)+1}^{S(Q)}\int_{\Omega^c\cap A_{s}(L_j)}\sup_{z\in L_j}|K_s(z,x)-K_s(x_{L_j},x)|\, dx,
        \end{align}
having used in the last step that, since $s>S(L_j)$, then $K_s(z,x)$ can only be non-zero for $z\in L_j$ if $x\in A_{s}(L_j)$ by formula \eqref{annuliProperty1} in Lemma \ref{lemmaAnnuliDecomposition}. To bound the remaining sum of integrals, we use the smoothness of the kernels $K_s$. Indeed, $K_s$ has Dini smoothness with modulus of continuity given by $\Tilde{\omega}(t)=t+\omega(t)$. Therefore, for $x\in A_s(L_j)$ and $z\in L_j$,
    \begin{equation}\label{chekingSmoothnessLemma}
    |K_s(x_{L_j},x)-K_s(z,x)|
    \lesssim 
    \frac{1}{|x-x_{L_j}|^d}
    \left[
    \frac{|z-x_{L_j|}}{|x-x_{L_j}|}+\omega\left(\frac{|z-x_{L_j|}}{|x-x_{L_j}|}\right)
    \right],
    \end{equation}
applying the smoothness condition because $|x-x_{L_j}|\geq 2|z-x_{L_j}| $. Moreover, on the one hand $x\in A_s(L_j)$ implies $|x-x_{L_j}|\geq 2^{s-2}$. On the other hand, $|x_{L_j}-z| \lesssim_d \ell(L_j)$ if $z \in L_j$, so 
    \begin{equation}\label{smoothnessLj}
    |K_s(x_{L_j},x)-K_s(z,x)|
    \lesssim 
    2^{-d[s-2]}
    \left[\frac{\ell(L_j)}{2^{s-2}}+\omega\left(\frac{\ell(L_j)}{2^{s-2}}\right)\right].
    \end{equation}
\eqref{smoothnessLj} allows us to sum up:
        \begin{align*}
        |\mathrm{II}_1|
        &\lesssim
        \langle f_1\rangle_{\Lambda_dQ}\sum_j\|b_{2,L_j}\|_{1}\sum_{s=S(L_j)+1}^{S(Q)}|A_s(L_j)| 2^{-d[s-2]}
    \left[\frac{\ell(L_j)}{2^{s-2}}+\omega\left(\frac{\ell(L_j)}{2^{s-2}}\right)\right] \\
    & \lesssim \langle f_1\rangle_{\Lambda_dQ}\sum_j\|b_{2,L_j}\|_{1}\sum_{s=S(L_j)+1}^{S(Q)}2^{sd} 2^{-d[s-2]}
    \left[2^{s_{L_j}-s}+\omega\left(2^{s_{L_j}-s-2}\right)\right] \\
    & \lesssim 
        \langle f_1\rangle_{\Lambda_dQ}\sum_j\|b_{2,L_j}\|_{1}\left[\sum_{s=S(L_j)+1}^{S(Q)}
        2^{s_{L_j}-s}
        +
        \sum_{s=S(L_j)+1}^{S(Q)}\omega\left(2^{s_{L_j}-s-2}\right)
        \right]
        \\&\leq 
        \langle f_1\rangle_{\Lambda_dQ}\sum_j\|b_{2,L_j}\|_{1}\left[1+\int_{0}^{1}\frac{\omega(t)}{t}\, dt\right], 
        \end{align*}
using $|A_s(L_j)| \sim 2^{sd}$. So, by \eqref{L1NormBadPart},
        \begin{align*}
        |\mathrm{II}_1|    
        \lesssim_{\omega}
        \langle f_1\rangle_{\Lambda_dQ}\sum_j\|b_{2,L_j}\|_{L^1}
        \lesssim
        \langle f_1\rangle_{\Lambda_dQ}\|f_2\|_{L^1(\Lambda_dQ)}
        =
        \langle f_1\rangle_{\Lambda_dQ}\langle f_2\rangle_{\Lambda_dQ}|\Lambda_dQ|.
    \end{align*}
We now estimate $\mathrm{II}_2$. As for $\mathrm{II}_1$, to use the smoothness of the kernel we pass to the adjoint operator:
\begin{align*}
\mathrm{II}_2 & = \sum_{j}\langle \langle f_1\rangle_{L_j}\one_{L_j},\left(T_{(S(L_j)+1)}^{(S(Q))}\right)^{\ast}(b_2)\rangle \\
& = \sum_j\sum_{s=S(L_j)+1}^{S(Q)}\langle \langle f_1\rangle_{L_j}\one_{L_j},(T_s)^{\ast}(b_2)\rangle \\
& = \sum_{j}\sum_{s=S(L_j)+1}^{S(Q)}\sum_{j'}
\langle \langle f_1\rangle_{L_j}\one_{L_j},(T_s)^{\ast}(b_{2,L_{j'}})\rangle.
\end{align*}
To apply the smoothness of $K_s$ to $b_{2,L_{j'}}$ we need $s\geq S(L_{j'})+1$ as before, so we split the sum as
\begin{equation}\label{II_2LoQueHayQueEstimar}
\mathrm{II}_2
=
\sum_{j}\langle f_1\rangle_{L_j}\sum_{j'}\sum_{s=S(L_j)+1}^{S(L_{j'})}\langle \one_{L_j},(T_s)^{\ast}(b_{2,L_{j'}})\rangle
+
\sum_{j}\langle f_1\rangle_{L_j}\sum_{j'}\sum_{s=S(L_{j'})+1}^{S(Q)}\langle \one_{L_j},(T_s)^{\ast}(b_{2,L_{j'}})\rangle,    
\end{equation}
where the first sum is considered empty when $S(L_{j})+1>S(L_{j'})$, i.e., when $s_{L_j}+1>s_{L_{j'}}$. Now, for the second term all computations are the same as for $\mathrm{II}_1$. We just need to notice that we can get rid off the sum in $j$ because the $\{L_j\}$ are disjoint:
\[
\sum_j\int_{L_j\cap A_s(L_{j'})}\left|\int_{L_{j'}}K_s(y,x)b_{2,L_{j'}}(y)\, dy\right|\, dx
\leq 
\int_{A_s(L_{j'})}\left|\int_{L_{j'}}K_s(y,x)b_{2,L_{j'}}(y)\, dy\right|\, dx.
\]
Therefore, we are left to estimate the first sum in \eqref{II_2LoQueHayQueEstimar} when $s_{L_{j'}}>s_{L_j}$.
But the number of terms that we need to estimate is controlled by a constant that depends only on $C_{\mathcal{W}}$ and $d$. Indeed, we next check that the scales of $L_j$ and $L_{j'}$ have to be comparable in order for the corresponding term to be non-zero. Suppose that $s\in \Z$ is such that $S(L_j)+1\leq s\leq S(L_{j'})$, with $s_{L_j}<s_{L_{j'}}$, and that $\langle \one_{L_j},(T_s)^{\ast}(b_{2,L_{j'}})\rangle\neq 0$.
In particular, $\operatorname{supp}(\one_{L_j})\cap\operatorname{supp}(T_s)^{\ast}(b_{2,L_{j'}})\neq \varnothing.$
Recall that $\supp{\one_{L_j}}\subset L_j$ and, by (the proof of) Lemma \ref{localizationTs}, $\operatorname{supp}(T_s)^{\ast}(b_{2,L_{j'}})\subset\Lambda_dL_{j'}$ for $s\leq S(L_{j'})$.
So $L_j\cap \Lambda_dL_{j'}\neq\varnothing$. Therefore,
\[
\dist{L_{j'},\Omega^c}
\leq 
\diam{\Lambda_dL_{j'}}
+
\diam{L_{j}}
+
\dist{L_j,\Omega^c}.    
\]
Then, by Theorem \ref{Whitney} and using $C_{\mathcal{W}}>\sqrt{d}(2\Lambda_d+1)$ we get
\[
\dist{L_{j'},\Omega^c}
\geq 
(C_{\mathcal{W}}-\sqrt{d})2^{s_{L_{j'}}}
\]
and
\begin{align*}
\diam{\Lambda_dL_{j'}}
+
\diam{L_j}
+
\dist{L_{j},\Omega^c}
&=
\sqrt{d}\Lambda_d\ell(L_{j'})
+
\sqrt{d}\ell(L_j)
+
\dist{L_j,\Omega^c}
\\&\leq 
\sqrt{d}\Lambda_d2^{s_{L_{j'}}}
+
\sqrt{d}2^{s_{L_j}}
+
2C_{\mathcal{W}}2^{s_{L_j}}.    
\end{align*}
Therefore,
\[
[C_{\mathcal{W}}-\sqrt{d}(1+\Lambda_d)]2^{s_{L_{j'}}}
\leq 
[\sqrt{d}+2C_{\mathcal{W}}]2^{s_{L_j}},
\]
which implies
\[
\ell(L_{j'})
\lesssim_{C_{\mathcal{W}},d} 
\ell(L_j).
\]
Since $s_{L_j}<s_{L_{j'}}$, there is $N_0$ such that $\ell(L_j) < \ell(L_{j'})\leq 2^{N_0}\ell(L_j)$. Additionally, since $L_j\cap \Lambda_dL_{j'}\neq \varnothing$ with $\ell(L_j)\sim \ell(L_{j'})$, there exists $\mu>0$ (independent of $j$ and $j'$) such that $L_{j'}\subset \mu L_{j}$. We also have the $L^1\to L^\infty$ estimate \eqref{singleScale1infty} for $(T_s)^{\ast}$, which yields
\[
|\langle \one_{L_j},(T_s)^{\ast}(b_{2,L_{j'}})\rangle|
\leq 
\|\one_{L_j}\|_{1}
\|(T_s)^{\ast}(b_{2,L_{j'}})\|_{\infty}
\lesssim 
|L_j|\cdot
2^{-sd}\|b_{2,L_{j'}}\|
\lesssim 
|L_j|\cdot2^{-sd}\|f_2\one_{L_{j'}}\|_{L^1}.
\]
We plug it in the estimate that remains to get
\begin{align*}
\left|\sum_{j}\langle f_1\rangle_{L_j}\sum_{j'} \right. & \left. \sum_{s=S(L_j)+1}^{S(L_{j'})}\langle \one_{L_j},(T_s)^{\ast}(b_{2,L_{j'}})\rangle \right| \\
& \lesssim \langle f_1\rangle_{\Lambda_d Q}  \sum_{\substack{j,j'\\ \ell(L_j)<\ell(L_{j'})\leq 2^{N_0} \ell(L_j)\\ L_{j'}\subset \mu L_j}}
\sum_{s=S(L_j)+1}^{S(L_{j})+N_0+1}
\left|\langle \one_{L_j},(T_s)^{\ast}(b_{2,L_{j'}})\rangle\right| \\
& \lesssim \langle f_1\rangle_{\Lambda_d Q}  \sum_{\substack{j,j'\\ \ell(L_j)<\ell(L_{j'})\leq 2^{N_0} \ell(L_j)\\ L_{j'}\subset \mu L_j}}
\sum_{s=S(L_j)+1}^{S(L_{j})+N_0+1}
|L_j|\cdot2^{-sd}\|f_2\one_{L_{j'}}\|_{L^1} \\
& \lesssim \langle f_1\rangle_{\Lambda_d Q}  \sum_{\substack{j,j'\\ \ell(L_j)<\ell(L_{j'})\leq 2^{N_0} \ell(L_j)\\ L_{j'}\subset \mu L_j}}
\sum_{s=S(L_j)+1}^{S(L_{j})+N_0+1}
|L_{j}|\langle f_2\rangle_{L_{j'}}\\
& \lesssim \langle f_1\rangle_{\Lambda_d Q} \langle f_2\rangle_{\Lambda_d Q} \sum_j |L_{j}|\sum_{\substack{j'\\ \ell(L_j)<\ell(L_{j'})\leq 2^{N_0} \ell(L_j)\\ L_{j'}\subset \mu L_j}} \sum_{s=S(L_j)+1}^{S(L_{j})+N_0+1} 1\\
& \lesssim \langle f_1\rangle_{\Lambda_d Q}
\langle f_2\rangle_{\Lambda_d Q}|\Lambda_d Q|.
\end{align*}
The last step above is justified by the fact that the $L_j$ are pairwise disjoint and contained in $2\Lambda_d Q$, and because the two inner sums run over boundedly many terms: the innermost runs over $N_0$ terms, while $\mu$ just depends on $C_{\mathcal{W}}$ and $d$, so
\[
\#\{j': \ell(L_j)<\ell(L_{j'}),\hspace{0.1cm} L_{j'}\subset \mu L_j\}
\lesssim_{C_{\mathcal{W}},d}
1.
\]

\noindent \textit{Estimate of $\mathrm{III}$:} By definition and the $L^2$-continuity and linearity of the operators and the scalar product,
    \[
    \mathrm{III}
    =
    \langle (T^{(S(Q))}-T_{\mathcal{P}})(b_1),g_2\rangle
    =
    \sum_{j}\langle T^{(S(Q))}(b_{1,L_j}),g_2\rangle 
    -
    \sum_{j}\sum_{j'}T^{(S(L_{j'}))}(b_{1,L_j}\one_{L_{j'}}),g_2\rangle.
    \]
    Since $\supp{b_{1,L_j}}\subset L_j$ and $L_{j}\cap L_{j'}=\varnothing$ if $j\neq j'$, then $b_{1,L_j}\one_{L_{j'}}\equiv 0$ if $j\neq j'$. So the above sum is
    \[
    \sum_j \langle (T^{(S(Q))}-T^{(S(L_j))})(b_{1,L_j}),g_2\rangle 
    =
    \sum_j\sum_{s=S(L_j)+1}^{S(Q)}\langle T_s(b_{1,L_j}),g_2\rangle.
    \]
    We have the scalar product of a good function against the operators $T_s$ with $s>S(L_j)$ acting on the mean $0$ functions $b_{1,L_j}$ 
    Therefore, the estimate is exactly the same as in $\mathrm{II}$, with the only differences that the indices $i=1,2$ are interchanged and we have $T_s$ instead of $(T_s)^{\ast}$. Hence,
    \[
    |\mathrm{III}|
    \lesssim 
    \langle f_1\rangle_{\Lambda_d Q}
    \langle f_2\rangle_{\Lambda_d Q}
    |\Lambda_d Q|.
    \]
\noindent \textit{Estimate of $\mathrm{IV}$:} By definition of $T_{\mathcal{P}}$ and $b_1$, and continuity and linearity of the operators and the scalar product,
    \[
    \mathrm{IV}
    =
    \langle (T^{(S(Q))}-T_{\mathcal{P}})(b_1),b_2\rangle 
    =
    \sum_j \langle T^{(S(Q))}(b_{1,L_j}),b_2\rangle
    -
    \sum_j\sum_{j'}\langle T^{(S(L_{j'}))}(b_{1,L_j}\one_{L_{j'}}),b_2\rangle.
    \]
    As reasoned in $\mathrm{III}$, $b_{1,L_j}\one_{L_{j'}}\equiv 0$ if $j\neq j'$. So
    \[
    \mathrm{IV}
    =
    \sum_j \langle (T^{(S(Q))}-T^{(S(L_j))}(b_{1,L_j}),b_2\rangle.
    \]
    Therefore, the estimate is of the same form as $\mathrm{III}$, with the small difference that now we have $b_2$ instead of $g_2$. We write the details to show the minor changes that need to be made in the argument.
    
    \begin{align*}    
    \mathrm{IV}
    &=   
    \sum_j \int_{\R^d}\left(\int_{\R^d}\left(
    \sum_{s=S(L_j)+1}^{S(Q)} K_s(x,y)
    \right)b_{1,L_j}(y)\, dy\right) b_2(x)\, dx
    \\&=
    \sum_{j}\sum_{s=S(L_j)+1}^{S(Q)} \int_{\R^d}\left(\int_{L_j}[K_s(x,y)-K_s(x,x_{L_j})]b_{1,L_j}(y)\, dy\right) b_2(x)\, dx.
    \end{align*}
    Introducing absolute values, we have
    \[
    |\mathrm{IV}|
    \leq 
    \sum_{j}
    \sum_{s=S(L_j)+1}^{S(Q)}
    \left(\int_{L_j}|b_{1,L_j}(y)|\, dy\right)\left(\int_{\R^d} \sup_{z\in L_j}|K_s(x,z)-K_s(x,x_{L_j})|b_2(x)|\, dx\right).
    \]
        
    Again, we have operators $T_s$ acting on $b_{1,L_j}$ with scales $s>S(L_j)$. So we can restrict the integral over $\R^d$ to $A_s(L_j)$ by Lemma \ref{lemmaAnnuliDecomposition} and apply the smoothness estimate as in $II_1$, obtaining
    \begin{align}\label{smoothnessIV}
    \nonumber&\int_{A_s(L_j)}\sup_{z\in L_j}|K_s(x,z)-K_s(x,x_{L_j})|\cdot |b_2(x)|\, dx
    \\&\lesssim 
    \int_{A_s(L_j)}\frac{1}{|x-x_{L_j}|^d}\left[\frac{|z-x_{L_j}|}{|x-x_{L_j}|}+\omega\left(\frac{|z-x_{L_j}|}{|x-x_{L_j}|}\right)\right]|b_2(x)|\, dx,   
    \end{align}
    using again $2|z-x_{L_j}|\leq |x-x_{L_j}|$ to apply the smoothness. 
    Since $x\in A_s(L_j)$, $|x-x_{L_j}|\geq 2^{s-2}$, and since $x_{L_j},z\in L_j$, $|z-x_{L_j}|\leq \sqrt{d}2^{s_{L_j}-1}\leq 2^{S(L_j)-2}$. So 
    \[
    \frac{1}{|x-x_{L_j}|^d}
    \lesssim 
    2^{-sd}
    \quad \text{ and } \quad
    \frac{|z-x_{L_j}|}{|x-x_{L_j}|}
    \leq 
    \frac{1}{2^{s-S(L_j)}}.
    \]
    Hence, using that $\omega$ is non-decreasing, \eqref{smoothnessIV} can be bounded by
    \[
    \eqref{smoothnessIV}
    \lesssim
    2^{-sd}\left[\frac{1}{2^{s-S(L_j)}}+\omega\left(\frac{1}{2^{s-S(L_j)}}\right)\right]
    \int_{A_s(L_j)}|b_2(x)|\, dx.
    \]
Denote $B_s(L_j)=B(x_{L_j};2^{s+k(d)+1})$ the smallest ball that contains $A_s(L_j)$. Then
    \[
    \int_{A_s(L_j)}|b_2(x)|\, dx
    \leq 
    \int_{B_s(L_j)}|b_2(x)|\, dx
    =
    |B_s(L_j)|\langle |b_2|\rangle_{B_s(L_j)}
    \sim 
    2^{sd}\langle |b_2|\rangle_{B_s(L_j)} \lesssim 2^{sd}\langle f_2\rangle_{\Lambda_d Q},
    \]
    by \eqref{1.2_Jose}. Therefore, 
    \begin{align*}
        |\mathrm{IV}|
        &\leq 
        \sum_{j}
        \sum_{s=S(L_j)+1}^{S(Q)}
        \left(\int_{L_j}|b_{1,L_j}(y)|\, dy\right)\left(\int_{\R^d} \sup_{z\in L_j}|K_s(x,z)-K_s(x,x_{L_j})|b_2(x)|\, dx\right)
        \\&\lesssim 
        \langle f_2\rangle_{\Lambda_d Q}\sum_{j}\|b_{1,L_j}\|_{L^1}\sum_{s=S(L_j)+1}^{S(Q)}\left[\frac{1}{2^{s-S(L_j)}}+\omega\left(\frac{1}{2^{s-S(L_j)}}\right)\right]
        \\&\lesssim 
        \langle f_2\rangle_{\Lambda_d Q}\sum_{j}\|b_{1,L_j}\|_{L^1}
        \lesssim 
        \langle f_2\rangle_{\Lambda_d Q}\|f_1\|_{L^1(\Lambda_d Q)}
        =
        \langle f_1\rangle_{\Lambda_d Q}\langle f_2\rangle_{\Lambda_d Q}|\Lambda_d Q|,
    \end{align*}
    having applied \eqref{L1NormBadPart} in the last inequality.\qedhere
\end{proof}
 
\subsection{Construction of the sparse family} We can now finish the proof of Theorem \ref{theoremA}. 

\begin{proof}[Proof of Theorem \ref{theoremA}] The idea of the proof is to apply Lemma \ref{lemma1.1_Jose} iteratively, for which we qualitatively assume that $f_1,f_2 \in L^\infty(\mathbb{R}^d)$. We will denote by $\mathcal{P}_n$ the family of cubes on which we will localise the estimate at the $n$-th iteration. We know that $\supp{f_1}\subset Q_0$, so we let $\mathcal{P}_0\coloneqq \{Q_0\}$.

Fix $\eta_0\in (0,1)$. We will build a $(C,\eta_0)$-sparse family such that the estimate in Theorem \ref{theoremA} holds, for some constant $C\in (0,1)$. We do it following an iteration procedure. For the first iteration, let 
    \[
    \Omega_i^{Q_0}
    \coloneqq 
    \{
    x\in\R^d: \mathcal{M}f_i(x)>c_0\langle f_i\rangle_{\Lambda_{d} Q_0}
    \},\quad i=1,2,
    \]
    for some $c_0$ to be chosen later. Define the open set
    \[
    \Omega_{Q_0}
    \coloneqq 
    \Omega_1^{Q_0}\cup \Omega_2^{Q_0},
    \]
which is contained in $2\Lambda_d Q_0$ and gives
\[
\mathcal{M}f_i(x)\leq c_0\langle f_i\rangle_{\Lambda_d Q_0}\quad \text{ if }x\in\Omega_{Q_0}^{c},
\]
for $i=1,2$. Let $\{L_j^{(1)}\}_{j\in J_1(Q_0)}$ be the Whitney decomposition of $\Omega_{Q_0}\subset 2\Lambda_dQ_0$ with Whitney constant $C_{\mathcal{W}}>\sqrt{d}(2\Lambda_d+1)$ as in Theorem \ref{Whitney}. Since $\supp{f_1}\cup \supp{f_2}\subset Q_0$, we can apply Lemma \ref{lemma1.1_Jose} (notice that $\langle Tf_1,f_2\rangle =\langle T^{(S(Q_0))}f_1,f_2\rangle$) and obtain the following estimate:
    \begin{equation}\label{eq:Level0Bound}
        \left|\left\langle Tf_1, f_2\right\rangle\right| = \left|\left\langle T^{(S(Q_0))} (f_1), f_2\right\rangle\right|
        \leq
        C\left\langle f_1\right\rangle_{\Lambda_dQ_0}\left\langle f_2\right\rangle_{\Lambda_dQ_0}|\Lambda_dQ_0|
        +
        \sum_{j\in J_1(Q_0)}\left|\left\langle T^{(S(L_j^{(1)}))} (f_1 \one_{L_j^{(1)}}), f_2\right\rangle\right|.
    \end{equation}
    Assume without loss of generality that $Q_0$ is dyadic. Since $\supp{f_1}\subset Q_0$, the non-zero terms in the sum will be those such that $Q_0\cap L_j^{(1)}\neq\varnothing$. By the choice of $C_{\mathcal{W}}$, $2\Lambda_d L_j\subset \Omega_{Q_0}\subset 2\Lambda_d Q_0$, so by dyadic nesting it must be $L_j^{(1)}\subset Q_0$. Hence, the non-zero terms in the sum correspond to the cubes 
    \[
    \mathcal{P}_1
    \coloneqq
    \{L_j^{(1)}: j\in J_1(Q_0), L_j^{(1)}\subset Q_0\}.
    \]
    If we denote $\mathcal{P}_1=\{L_j^1\}_{j\in J_1}$, estimate \eqref{eq:Level0Bound} can be written as
    \[
    \left|\left\langle T^{(S(Q))} (f_1), f_2\right\rangle\right|
    \leq
    C\left\langle f_1\right\rangle_{\Lambda_dQ_0}\left\langle f_2\right\rangle_{\Lambda_dQ_0}|\Lambda_dQ_0|
    +
    \sum_{j\in J_1}\left|\left\langle T^{(S(L_j^{1}))} (f_1 \one_{L_j^{1}}), f_2\right\rangle\right|.
    \]
    Now we repeat the process for each of the terms in the sum. Define
    \[
    \Omega_{i}^{L_{j}^{1}}
    \coloneqq 
    \{
    x\in\R^d: \mathcal{M}(f_i\mathds{1}_{\Lambda_d L_j^{1}})(x)
    >
    c_0\langle f_i\rangle_{\Lambda_d L_j^{1}}\},\quad i=1,2.
    \]
    Then $\Omega_{L_j^{1}}\coloneqq \Omega_{1}^{L_{j}^{1}}\cup \Omega_{2}^{L_{j}^{1}}$ is contained in $2\Lambda_d L_j^{1}$ and we have
    \[
    \mathcal{M}(f_i\mathds{1}_{\Lambda_d L_j^{1}})(x)
    \lesssim 
    \langle f_i\rangle_{\Lambda_d L_j^{1}} \quad \text{ if }x\in\Omega_{L_j^1}^{c}, \quad i=1,2.
    \]
    Taking the Whitney decomposition $\{L_{k}^{(2)}\}_{k\in J_2(j)}$ of $\Omega_{L_j^{1}}$, we can apply Lemma \ref{lemma1.1_Jose} and obtain 
    \begin{align}\label{eq:Level1Bound}
        \nonumber|\langle T^{(S(L_j^{1})}(f_1\mathds{1}_{L_j^{1}},f_2\rangle |
        &\leq 
        C\langle f_1\mathds{1}_{L_j^{1}}\rangle_{\Lambda_d L_j^{1}}\langle f_2\rangle_{\Lambda_d L_j^{1}}|\Lambda_d L_j^{1}|
        \\&\phantom{=}+
        \sum_{k\in J_2(j)}|\langle T^{(S(L_{j_2}^{(2)}))}(f_1\mathds{1}_{L_j^{1}}\mathds{1}_{L_{k}^{(2)}},f_2\rangle|.
    \end{align}
    for each $j\in J_1$. Again, for the terms in the last sum to be non-zero we need $L_j^{1}\cap L_{k}^{(2)}\neq \varnothing$. Since $2\Lambda_dL_{k}^{(2)}\subset \Omega_{L_j^{1}}\subset 2\Lambda_d L_j^{1}$, then $L_{k}^{(2)}\subset L_j^{1}$. Hence, the sum in \eqref{eq:Level1Bound} can be taken over the cubes in 
    \[
    \mathcal{P}_{2}(j)
    \coloneqq 
    \{L_{k}^{(2)}: j_2\in J_2(j) \text{ for some }j\in J_1 \text{ and }L_{k}^{(2)}\subset L_j^{1}\}, \quad j\in J_1.
    \]
    The collection of second generation cubes is
    \[
    \mathcal{P}_2
    \coloneqq 
    \bigcup_{j\in J_1}\mathcal{P}_2(j)
    \eqqcolon 
    \{L_k^{2}\}_{k\in J_2}.
    \]
$\mathcal{P}_1=\{L_j^{1}\}_{j\in J_1}$ is a pairwise disjoint family. Likely, the cubes in $\mathcal{P}_2$ are also pairwise disjoint. Indeed, let $L,L'\in \mathcal{P}_2$. This means that $L=L_k^{(2)}\in \mathcal{P}_2(j)$ and $L'=L_{k'}^{(2)}\in\mathcal{P}_2(j')$ for some $j,j'\in J_1$. If $j=j'$, then $L_k^{(2)}$ and $L_{k'}^{(2)}$ are disjoint because they are Whitney cubes of the open set $\Omega_{L_j^1}$. If $j\neq j'$, then $L_k^{(2)}$ and $L_{k'}^{(2)}$ are disjoint because $L_k^{(2)}\cap L_{k'}^{(2)}\subset L_{j}^1\cap L_{j'}^1=\varnothing$ because $L_j^{1}$ and $L_{j'}^{1}$ are Whitney cubes of $\Omega_{Q_0}$. We can now combine estimates \eqref{eq:Level0Bound} and \eqref{eq:Level1Bound} to obtain
    \begin{equation}\label{eq:Level2Bound}
        |\langle Tf_1,f_2\rangle|
        \leq 
        C\sum_{i=0,1}\sum_{Q\in\mathcal{P}_i}
        \langle f_1\rangle_{\Lambda_d Q}\langle f_2\rangle_{\Lambda_d Q}|\Lambda_d Q|
        +
        \sum_{L\in \mathcal{P}_2}|\langle T^{(S(L))}(f_1\mathds{1}_{L}),f_2\rangle|.
    \end{equation}
To perform the $n$-th iteration, $n\geq 3$, we have from the $(n-1)$-th iteration that
    \begin{equation}\label{eq:LevelNMinus1Bound}
        |\langle Tf_1,f_2\rangle|
        \leq 
        C\sum_{i=0}^{n-2}\sum_{Q\in\mathcal{P}_i}
        \langle f_1\rangle_{\Lambda_d Q}\langle f_2\rangle_{\Lambda_d Q}|\Lambda_d Q|
        +
        \sum_{L\in \mathcal{P}_{n-1}}|\langle T^{(S(L))}(f_1\mathds{1}_{L}),f_2\rangle|.
    \end{equation}
    We write $\mathcal{P}_{n-1}\eqqcolon\{L_j^{n-1}\}_{j\in J_{n-1}}$. For $j\in J_{n-1}$, we perform the Whitney decomposition of $\Omega_{L_j^{n-1}}\coloneqq \Omega_1^{L_{j}^{n-1}}\cup \Omega_2^{L_j^{n-1}}$, with
    \[
    \Omega_{i}^{L_j^{n-1}}
    \coloneqq 
    \{x\in\R^d: \mathcal{M}(f_i\mathds{1}_{\Lambda_d L_j^{n-1}})(x)>c_0\langle f_i\rangle_{\Lambda_d L_{j}^{n-1}}\},
    \quad i=1,2.
    \]
obtaining a collection $\{L_{k}^{(n)}\}_{k\in J_{n}(j)}$ for each $j\in J_{n-1}$. Again, $\Omega_{L_j^{n-1}}\subset 2\Lambda_d L_j^{n-1}$ and $\mathcal{M}(f_i\mathds{1}_{\Lambda_d L_j^{n-1}})(x) \lesssim \langle f_i\rangle_{\Lambda_d L_j^{n-1}}$ if $x\not\in \Omega_{L_j^{n-1}}$. Hence, applying Lemma \ref{lemma1.1_Jose}, we obtain
    \begin{align}\label{eq:genericLevelBound}
        \nonumber|\langle T^{(S(L_{j}^{n-1})}(f_1\mathds{1}_{L_j^{n-1}}),f_2\rangle |
        &\leq 
        C\langle f_1\mathds{1}_{L_j^{n-1}}\rangle_{\Lambda_d L_j^{n-1}}\langle f_2\rangle_{\Lambda_d L_j^{n-1}}|\Lambda_d L_j^{n-1}|
        \\&+
        \sum_{k\in J_n(j)}|\langle T^{(S(L_{k}^{(n)}))}(f_1\mathds{1}_{L_j^{n-1}}\mathds{1}_{L_{k}^{(n)}}),f_2\rangle|.
    \end{align}
Reasoning as above, we define 
    \[
    \mathcal{P}_n(j)\coloneqq \{L_{k}^{(n)}: k\in J_{n-1}(j) \text{ for some }j\in J_{n-1}, L_{k}^{(n)}\subset L_j^{n-1}\},
    \]
    and
    \[
    \mathcal{P}_n
    \coloneqq 
    \bigcup_{j\in J_{n-1}} \mathcal{P}_{n}(j)
    \eqqcolon 
    \{L_j^{n}\}_{j\in J_{n}}.
    \]
The cubes in $\mathcal{P}_n$ are pairwise disjoint and
    \begin{equation}\label{eq:LevelNBound}
        |\langle Tf_1,f_2\rangle|
        \leq 
        C\sum_{i=0}^{n-1}\sum_{Q\in\mathcal{P}_i}
        \langle f_1\rangle_{\Lambda_d Q}\langle f_2\rangle_{\Lambda_d Q}|\Lambda_d Q|
        +
        \sum_{L\in \mathcal{P}_{n}}|\langle T^{(S(L))}(f_1\mathds{1}_{L}),f_2\rangle|.
    \end{equation}
Estimate \eqref{eq:LevelNBound} is enough to prove the statement, as we next see. We put $\mathcal{P}\coloneqq \bigcup_{i=0}^{\infty}\mathcal{P}_i$, and we define
\[
\mathcal{S}
\coloneqq 
\{\Lambda_d Q\}_{Q\in \mathcal{P}}
=
\bigcup_{i=0}^{\infty}\mathcal{S}(i)
\]
where $\mathcal{S}(i)\coloneqq \{\Lambda_d Q\}_{Q\in\mathcal{P}_i}$. To check that $\mathcal{S}$ is indeed sparse, we make a choice of disjoint subsets $E_Q$ corresponding to each $Q\in \mathcal{S}$. We first do so for $\Ss(0)\coloneqq\{\Lambda_dQ_0\}$. By the weak $(1,1)$-boundedness of $\mathcal{M}$,
\[
|\Omega_{Q_0}|
\leq 
\frac{2\|\mathcal{M}\|_{L^1\to L^{1,\infty}}}{c_0}|\Lambda_d Q_0|.
\]
Taking 
\[
c_0
=
\frac{2\|\mathcal{M}\|_{L^1\to L^{\infty}}(\Lambda_d)^d}{1-\eta_0},
\]
then 
\[
|\Omega_{Q_0}|
\leq
\frac{1-\eta_0}{2\|\mathcal{M}\|_{L^1\to L^{\infty}}(\Lambda_d)^d}
2\|\mathcal{M}\|_{L^1\to L^{1,\infty}}|\Lambda_d Q_0|
=
(1-\eta_0)|Q_0|,
\]
which allows us to choose the major subset
\[
    E_{\Lambda_dQ_0}
    \coloneqq 
    Q_0\mathbin{\big\backslash}{\bigcup_{j} L_j^{1}}.
\]
It satisfies 
\[
\left|E_{\Lambda_dQ_0}\right| 
\geq
\frac{\eta_0}{(\Lambda_d)^d}|\Lambda_d Q_0|,
\]
because
\[
    \left|
    Q_0\setminus{\bigcup_{j\in J_1} L_j^1}
    \right|
    \geq 
    |Q_0|-\sum_{j\in J_1}|L_j^1|
    =
    |Q_0|-|\Omega_{Q_0}|
    \geq
    |Q_0|-(1-\eta_0)| Q_0|
    =
    \eta_0|Q_0|.    
\]
To the cubes in $\Ss(1)\coloneqq \{\Lambda_dL_j^{1}\}_{j\in J_1}$, we assign the major subsets
    \[
    E_{\Lambda_dL_j^{1}}
    \coloneqq 
    L_j^{1}\setminus{\bigcup_{k\in J_2(j)} L_k^{(2)}}
    =
    L_j^{1}\setminus{\bigcup_{L\in \mathcal{P}_2(j)} L}
    =
    L_j^{1}\setminus{\bigcup_{L\in \mathcal{P}_2} L},
    \quad  j\in J_1, 
    \] 
    which again satisfy $|E_{\Lambda_dL_j^{1}}|\geq \frac{\eta_0}{(\Lambda_d)^d}|\Lambda_d L_j^{1}|$. And in general, to the cubes in $\Ss(n)\coloneqq \{\Lambda_d L_j^{n-1}\}_{j\in J_{n-1}}$ we associate the major subset
    \[
    E_{\Lambda_d L_j^{n-1}}
    \coloneqq 
    L_j^{n-1}\setminus{\bigcup_{k\in J_n(j)}L_k^{(n)}}
    =
    L_j^{n-1}\setminus{\bigcup_{L\in\mathcal{P}_{n}} L}, 
    \]
    and we have $|E_{\Lambda_d L_j^{n-1}}|\geq \frac{\eta_0}{(\Lambda_d)^d}|\Lambda_d L_j^{n-1}|$. Therefore, if we prove that the sets $\{E_{Q}\}_{Q\in\mathcal{S}}$ are disjoint, then $\mathcal{S}$ is $\frac{\eta_0}{(\Lambda_d)^d}$-sparse. To see that $\{E_{\Lambda_d Q}\}_{Q\in\mathcal{P}}$ are pairwise disjoint, let $Q=L_j^n\in\mathcal{P}_n$. It is enough to show that $E_{\Lambda_d Q}$ is pairwise disjoint with $E_{\Lambda_d Q'}$ for every $Q'\in\mathcal{P}_{m}$, $0\leq m\leq n$. First, as we have already observed, the cubes in $\mathcal{P}_n=\{L_j^n\}_{j\in J_n}$ are pairwise disjoint. Hence $E_{\Lambda_d L_j^n}\cap E_{\Lambda_d L_k^n}=\varnothing $ if $j\neq k$, so the major subsets of the cubes of the $n$-th level of $\mathcal{S}$ are disjoint. To pass to the above levels, we observe the following: each cube $L_j^n\in\mathcal{P}_{n}$ is really some $L_{j'}^{(n)}$ with $j'\in J_{n-1}(k)$ for some $k\in J_{n-1}$. That is, 
    \[
    L_{j}^n
    =
    L_{j'}^{(n)}
    \subset
    L_k^{n-1}.
    \]
    Therefore, since $L_j^{n}$ has been removed from $E_{\Lambda_d L_k^{n-1}}$ and $E_{\Lambda_d L_j^{n}}\subset L_j^n$, we have that
    \[
    E_{\Lambda_d L_k^{n-1}}\cap E_{\Lambda_d L_j^{n}}
    =
    \varnothing.
    \]
    But, at the same time, the cube $L_k^{n-1}$ has been removed from the major subsets of the cubes $L_l^{n-2}$, $l\in J_{n-2}$. Hence, the cube $L_j^{n}$, which is contained in $L_k^{n-1}$, has also been removed. Hence, iterating this procedure to reach level $0$, we have that
    \[
    E_{\Lambda_d L_j^n}
    \cap 
    E_{\Lambda_d L_k^{m}}
    =
    \varnothing
    \]  
    for every $k\in J_m$, $0\leq m\leq n-1$. Letting
    \[
    \Ss_n\coloneqq\bigcup_{j=0}^{n}\Ss(n),
    \]
    the estimate \eqref{eq:LevelNBound} at step $n$ can be written as
    \[
    |\langle Tf_1,f_2\rangle|
    \lesssim 
    \langle \mathcal{A}_{\Ss_n}f_1,f_2\rangle 
    +
    \sum_{Q\in \mathcal{P}_n}|\langle T^{(S(Q))}(f_1\one_{Q}),f_2\rangle|.
    \]
    Taking the limit as $n\to\infty$, we obtain the desired estimate. Indeed, the families $\Ss_n$ are increasing, and therefore $\langle \mathcal{A}_{\Ss_n}(f_1),f_2\rangle$ is an increasing sequence that converges to 
    \[
    \langle \mathcal{A}_{\Ss}(f_1),f_2\rangle
    =
    \sum_{Q \in \Ss}\left\langle f_1\right\rangle_Q\left\langle f_2\right\rangle_Q|Q|.
    \]
    The sum in $\mathcal{P}_n$ vanishes when $n\to\infty$, since
    \[
    \left|\sum_{Q\in\mathcal{P}_n}\langle T(f_1\one_Q),f_2\one_{\Lambda_d Q}\rangle \right|
    \lesssim 
    \sum_{Q\in\mathcal{P}_n}\|f_1\one_{Q}\|_2\|f_2\one_{\Lambda_d Q}\|_{2}
    \lesssim_{d}
    \|f_1\|_{\infty}\|f_2\|_{\infty}\sum_{Q\in\mathcal{P}_n}|Q|^{\frac{1}{2}}|Q|^{\frac{1}{2}}
    \xrightarrow{n\to\infty}
    0.
    \]
    To justify that the limit is $0$, notice that
    \[
    \sum_{Q\in\mathcal{P}_n}|Q|
    =
    \sum_{j\in J_{n-1}}
    \left(
    \sum_{L_{k}^{(n)}\in \mathcal{P}_n(j)}
    |L_k^{(n)}|
    \right)
    \leq 
    \sum_{j\in J_{n-1}}
    |\Omega_{L_j^{n-1}}|
    \leq 
    \sum_{j\in J_{n-1}}(1-\eta_0)|L_j^{n-1}|
    =
    (1-\eta_0)\sum_{Q\in\mathcal{P}_{n-1}}|Q|.
    \]
    On the right-hand side we obtain an identical expression to the one on the left-hand side, but now with $n-1$ instead of $n$. Iterating this estimate, we obtain
    \[
    \sum_{Q\in\mathcal{P}_n}|Q|
    \leq 
    (1-\eta_0)^n\sum_{j\in J_1}|L_j^{1}|
    =
    (1-\eta_0)^n|\Omega_{Q_0}|
    \lesssim
    (1-\eta_0)^{n+1}|Q_0|,
    \]
    which tends to $0$ as $n\to\infty$ because $\eta_0\in (0,1)$.  \qedhere    
\end{proof}

\section{Remarks and complements: nonhomogeneous sparse domination} \label{sec:sec3}

We end this note commenting on the differences and challenges that one encounters when transferring the statement of Theorem \ref{theoremA} to other measure spaces, and with some open problems. We assume now that our Calder\'on-Zygmund operator is an operator with integral representation
$$
Tf(x) = \int_{\mathcal{X}} K(x,y) f(y) \, d\mu(y),
$$
where $(\mathcal{X},d,\mu)$ is a metric measure space and the kernel $K$ has a local behavior $|K(x,y)| \sim |x-y|^{-n}$ for some integer $n$. 

\subsection{Homogeneous spaces} $(\mathcal{X},d,\mu)$ is a space of homogeneous type if $\mu$ is doubling with respect to $d$, that is, for each point $x\in\mathcal{X}=\supp{\mu}$ and every $r >0$, there holds
\begin{equation}\label{eq:doub}
\mu(B(x,2r)) \leq C_{\mathrm{doub}} \; \mu(B(x,r)).  
\end{equation}
In that situation, if one formulates the right smoothness condition for $K$ then Theorem \ref{theoremA} holds, as shown for example in \cite{Lo2021}. Moreover, the scheme of proof that we used in Section \ref{sec:sec2} can be used, even without postulating the existence of a nice replacement of the dyadic system, as shown in \cite{CADPPV2022}.

\subsection{Calder\'on-Zygmund operators with nondoubling measures} When \eqref{eq:doub} fails, the situation changes dramatically and the challenges that appear if one tries to formulate Theorem \ref{theoremA} in this situation are significant. Let us focus on the case $\mathcal{X}=\R^d$. There is a rich Calder\'on-Zygmund theory which was developed for measures of polynomial growth. What is postulated in that case is that there exists $0<n\leq d$ so that
$$
\mu(B(x,r)) \leq r^n
$$
holds for all $x\in\supp{\mu}$ and all $r>0$. The size growth of the singularity of Calder\'on-Zygmund kernels matches that of the measure, and a fairly complete transference of the classical results in Calder\'on-Zygmund theory is available, including weak-type estimates \cite{NTV1997,To2001b}, BMO spaces \cite{To2001}, and $Tb$ theorems \cite{NTV2003}. A more general theory is even possible, with measures satisfying milder assumptions \cite{Hy2010}. There are two results that transfer sparse domination to this setting, each following one of the methods laid out in the Introduction:
\begin{itemize}
    \item \textbf{Median oscillation formula:} The median oscillation formula was extended to general measures by H\"anninen \cite{Ha2017}. Using that and a system of dyadic cubes adapted to $\mu$ \cite{DM2000} one can follow the original scheme of \cite{Le2013}, obtaining a pointwise bound
    $$
    |Tf(x)|\lesssim \AS(\tilde{\mathcal{M}}f)(x),
    $$
    for some adequate modified maximal function $\tilde{\mathcal{M}}$ \cite{CoPa2019}. The drawback of this approach is that, even when $\mu$ is doubling, one does not recover the $A_2$ theorem, for the quantitative weighted dependence is suboptimal (one gets a quadratic dependence on the $A_2$ constant of the weight).
    \item \textbf{Grand maximal function:} Using once more the David-Mattila dyadic system from \cite{DM2000}, \cite{VoZK2018} follows the proof of \cite{Le2016} to obtain a pointwise estimate of the form
    $$
    |Tf(x)|\lesssim \sum_j a_j \tilde{\mathcal{A}}_{\Ss_j}|f|(x),
    $$
    where $\{a_j\}_j$ is summable and the modified sparse operators are of the form 
    $$
    \tilde{\mathcal{A}}_{\Ss} f(x) =\sum_{Q \in \Ss} \langle f\rangle_{\alpha Q} \one_Q(x),
    $$
    for some dilation parameter $\alpha$. The result does recover the $A_2$ theorem in the doubling case, but is not comparable with \cite{CoPa2019} in the general case, which shows that it is also suboptimal.
\end{itemize}

It is natural to wonder whether the arguments from Section \ref{sec:sec2} can be carried over to this setting. There exists a dyadic Calder\'on-Zygmund decomposition that works for general measures \cite{CCP2022}, so an estimate like the one in the iteration Lemma \ref{lemma1.1_Jose} cannot be ruled out. However, it is unclear whether the geometric technique of the Whitney covering, which is critically used to run the main stopping-time argument, admits an adequate replacement.

\begin{problem}
Find an iteration scheme to prove a version of Theorem \ref{theoremA} for measures of polynomial growth using the Calder\'on-Zygmund decomposition.
\end{problem}

One can avoid the sparse domination technique altogether and look for a path to generalize the $A_2$ theorem using different methods. Hyt\"onen's representation theorem \cite{Hy2012} states that a Calder\'on-Zygmund operator can be written as an average of certain dyadic operators, called dyadic shifts, which are generally considered to be discrete models of singular integrals. Haar shifts defined with respect to general measures were considered in \cite{LSMP2012}, where a characterization of the class of measures $\mu$ for which they are of weak-type $(1,1)$ was found. The corresponding class of measures fails to include all polynomial growth measures, which is an indication that a direct generalization of Hyt\"onen's representation is unlikely (see also \cite{deLaHerran}, where a representation theorem is proved that allows one to prove a $T1$ theorem, but not sharp weighted inequalities). 

\begin{problem}
Find the right generalization of Hyt\"onen's representation theorem for Calder\'on-Zygmund operators defined with respect to measures of polynomial growth, and prove a nonhomogeneous $A_2$ theorem.
\end{problem}

Even for Haar shifts that are of weak-type $(1,1)$ sparse domination fails, and more complicated averaging operators need to be considered in the right hand side of the sparse domination inequality in order to obtain a positive result \cite{CPW2023}. This yields interesting weighted inequalities as a corollary and it suggests that one can try a similar approach for Calder\'on-Zygmund operators.

\begin{problem}
Define an averaging operator $\mathcal{B}_\Ss$ so that the result
$$
|Tf(x)| \lesssim \AS|f|(x) + \mathcal{B}_\Ss|f|(x)
$$
holds and entails quantitatively sharp weighted inequalities. 
\end{problem}

\begin{remark}
When looking for weighted inequalities in the nonhomogeneous setting, it may be relevant to notice that the class of $A_p$ weights --defined with respect to the measure $\mu$-- fails to characterize $L^p$-boundedness of Calder\'on-Zygmund operators, as shown in \cite{To2007}. This may be an indication that the $A_p$ class and hence sparse operators --whose definition is adapted to $A_p$ weights-- may not be the right tools to study weighted bounds when the doubling condition fails.   
\end{remark} 

\noindent\textbf{Conflict of interest:} The authors report no conflict of interest. 

\bibliographystyle{alpha}
\bibliography{references}

\begin{thebibliography}{CACDPO17}

\bibitem[BFP16]{BFP2016}
Fr\'ed\'eric Bernicot, Dorothee Frey, and Stefanie Petermichl.
\newblock Sharp weighted norm estimates beyond {C}alder\'on-{Z}ygmund theory.
\newblock {\em Anal. PDE}, 9(5):1079--1113, 2016.

\bibitem[BRS24]{BRS2024}
David Beltran, Joris Roos, and Andreas Seeger.
\newblock Multi-scale sparse domination.
\newblock {\em Mem. Amer. Math. Soc.}, 298(1491):v+104, 2024.

\bibitem[CACDPO17]{CACDPO2017}
Jos\'e{}~M. Conde-Alonso, Amalia Culiuc, Francesco Di~Plinio, and Yumeng Ou.
\newblock A sparse domination principle for rough singular integrals.
\newblock {\em Anal. PDE}, 10(5):1255--1284, 2017.

\bibitem[CADPPV22]{CADPPV2022}
Jos\'e{}~M. Conde-Alonso, Francesco Di~Plinio, Ioannis Parissis, and Manasa~N.
  Vempati.
\newblock A metric approach to sparse domination.
\newblock {\em Ann. Mat. Pura Appl. (4)}, 201(4):1639--1675, 2022.

\bibitem[CAP19]{CoPa2019}
Jos\'e{}~M. Conde-Alonso and Javier Parcet.
\newblock Nondoubling {C}alder\'on-{Z}ygmund theory: a dyadic approach.
\newblock {\em J. Fourier Anal. Appl.}, 25(4):1267--1292, 2019.

\bibitem[CAPW25]{CPW2023}
Jos{\'e}~M. Conde~Alonso, Jill Pipher, and Nathan~A. Wagner.
\newblock Balanced measures, sparse domination and complexity-dependent weight
  classes.
\newblock {\em Math. Ann.}, 391(2):2209--2253, 2025.

\bibitem[CAR16]{CAR2016}
Jos\'e{}~M. Conde-Alonso and Guillermo Rey.
\newblock A pointwise estimate for positive dyadic shifts and some
  applications.
\newblock {\em Math. Ann.}, 365(3-4):1111--1135, 2016.

\bibitem[CCAP22]{CCP2022}
L\'eonard Cadilhac, Jos\'e{}~M. Conde-Alonso, and Javier Parcet.
\newblock Spectral multipliers in group algebras and noncommutative
  {C}alder\'on-{Z}ygmund theory.
\newblock {\em J. Math. Pures Appl. (9)}, 163:450--472, 2022.

\bibitem[CDPO18]{CuDPOu2018}
Amalia Culiuc, Francesco Di~Plinio, and Yumeng Ou.
\newblock Uniform sparse domination of singular integrals via dyadic shifts.
\newblock {\em Math. Res. Lett.}, 25(1):21--42, 2018.

\bibitem[CUMP12]{CMP2012}
David Cruz-Uribe, Jos\'e{}~Mar\'ia Martell, and Carlos P\'erez.
\newblock Sharp weighted estimates for classical operators.
\newblock {\em Adv. Math.}, 229(1):408--441, 2012.

\bibitem[DM00]{DM2000}
Guy David and Pertti Mattila.
\newblock Removable sets for {L}ipschitz harmonic functions in the plane.
\newblock {\em Rev. Mat. Iberoamericana}, 16(1):137--215, 2000.

\bibitem[GH18]{deLaHerran}
Ana {Grau de la Herr{\'a}n} and Tuomas Hyt{\"o}nen.
\newblock Dyadic representation and boundedness of nonhomogeneous
  calder{\'o}n-zygmund operators with mild kernel regularity.
\newblock {\em Michigan Mathematical Journal}, 67(4):757--786, 2018.

\bibitem[H\"17]{Ha2017}
Timo~S. H\"anninen.
\newblock Remark on median oscillation decomposition and dyadic pointwise
  domination.
\newblock {\em Houston J. Math.}, 43(1):183--197, 2017.

\bibitem[HRT17]{HRT2017}
Tuomas~P. Hyt\"onen, Luz Roncal, and Olli Tapiola.
\newblock Quantitative weighted estimates for rough homogeneous singular
  integrals.
\newblock {\em Israel J. Math.}, 218(1):133--164, 2017.

\bibitem[Hyt10]{Hy2010}
Tuomas Hyt\"onen.
\newblock A framework for non-homogeneous analysis on metric spaces, and the
  {RBMO} space of {T}olsa.
\newblock {\em Publ. Mat.}, 54(2):485--504, 2010.

\bibitem[Hyt12]{Hy2012}
Tuomas~P. Hyt\"onen.
\newblock The sharp weighted bound for general {C}alder\'on-{Z}ygmund
  operators.
\newblock {\em Ann. of Math. (2)}, 175(3):1473--1506, 2012.

\bibitem[Lac17]{La2017}
Michael~T. Lacey.
\newblock An elementary proof of the {$A_2$} bound.
\newblock {\em Israel J. Math.}, 217(1):181--195, 2017.

\bibitem[Ler13a]{Le2013}
Andrei~K. Lerner.
\newblock On an estimate of {C}alder\'on-{Z}ygmund operators by dyadic positive
  operators.
\newblock {\em J. Anal. Math.}, 121:141--161, 2013.

\bibitem[Ler13b]{Le2013b}
Andrei~K. Lerner.
\newblock A simple proof of the {$A_2$} conjecture.
\newblock {\em Int. Math. Res. Not. IMRN}, (14):3159--3170, 2013.

\bibitem[Ler16]{Le2016}
Andrei~K. Lerner.
\newblock On pointwise estimates involving sparse operators.
\newblock {\em New York J. Math.}, 22:341--349, 2016.

\bibitem[LMA17]{LaMe2017}
Michael~T. Lacey and Dar\'io Mena~Arias.
\newblock The sparse {T}1 theorem.
\newblock {\em Houston J. Math.}, 43(1):111--127, 2017.

\bibitem[LN19]{LeNa2019}
Andrei~K. Lerner and Fedor Nazarov.
\newblock Intuitive dyadic calculus: the basics.
\newblock {\em Expo. Math.}, 37(3):225--265, 2019.

\bibitem[LO20]{LeO2020}
Andrei~K. Lerner and Sheldy Ombrosi.
\newblock Some remarks on the pointwise sparse domination.
\newblock {\em J. Geom. Anal.}, 30(1):1011--1027, 2020.

\bibitem[Lor21]{Lo2021}
Emiel Lorist.
\newblock On pointwise {$\ell^r$}-sparse domination in a space of homogeneous
  type.
\newblock {\em J. Geom. Anal.}, 31(9):9366--9405, 2021.

\bibitem[LSMP14]{LSMP2012}
Luis~Daniel L\'opez-S\'anchez, Jos\'e{}~Mar\'ia Martell, and Javier Parcet.
\newblock Dyadic harmonic analysis beyond doubling measures.
\newblock {\em Adv. Math.}, 267:44--93, 2014.

\bibitem[Moe12]{Moen}
Kabe Moen.
\newblock Sharp weighted bounds without testing or extrapolation.
\newblock {\em Arch. Math.}, 99(5):157--466, 2012.

\bibitem[NTV97]{NTV1997}
F.~Nazarov, S.~Treil, and A.~Volberg.
\newblock Cauchy integral and {C}alder\'on-{Z}ygmund operators on
  nonhomogeneous spaces.
\newblock {\em Internat. Math. Res. Notices}, (15):703--726, 1997.

\bibitem[NTV03]{NTV2003}
F.~Nazarov, S.~Treil, and A.~Volberg.
\newblock The {$Tb$}-theorem on non-homogeneous spaces.
\newblock {\em Acta Math.}, 190(2):151--239, 2003.

\bibitem[Per19]{Pe2019}
Mar\'ia~Cristina Pereyra.
\newblock Dyadic harmonic analysis and weighted inequalities: the sparse
  revolution.
\newblock In {\em New trends in applied harmonic analysis. {V}ol. 2---harmonic
  analysis, geometric measure theory, and applications}, Appl. Numer. Harmon.
  Anal., pages 159--239. Birkh\"auser/Springer, Cham, [2019] \copyright 2019.

\bibitem[Ste70]{steinSingularIntegrals}
Elias~M. Stein.
\newblock {\em Singular integrals and differentiability properties of
  functions}, volume No. 30 of {\em Princeton Mathematical Series}.
\newblock Princeton University Press, Princeton, NJ, 1970.

\bibitem[Tol01a]{To2001}
Xavier Tolsa.
\newblock B{MO}, {$H^1$}, and {C}alder\'on-{Z}ygmund operators for non doubling
  measures.
\newblock {\em Math. Ann.}, 319(1):89--149, 2001.

\bibitem[Tol01b]{To2001b}
Xavier Tolsa.
\newblock A proof of the weak {$(1,1)$} inequality for singular integrals with
  non doubling measures based on a {C}alder\'on-{Z}ygmund decomposition.
\newblock {\em Publ. Mat.}, 45(1):163--174, 2001.

\bibitem[Tol07]{To2007}
Xavier Tolsa.
\newblock Weighted norm inequalities for {C}alder\'on-{Z}ygmund operators
  without doubling conditions.
\newblock {\em Publ. Mat.}, 51(2):397--456, 2007.

\bibitem[VZK18]{VoZK2018}
Alexander Volberg and Pavel Zorin-Kranich.
\newblock Sparse domination on non-homogeneous spaces with an application to
  {$A_p$} weights.
\newblock {\em Rev. Mat. Iberoam.}, 34(3):1401--1414, 2018.

\end{thebibliography}

\end{document}